\numberwithin{equation}{section}
\newtheorem{theorem}{Theorem}[section]
\newtheorem*{A1}{{\bf Theorem A}}
\newtheorem*{B1}{{\bf Theorem B}}
\newtheorem{Remark}{Remark}[section]
\newtheorem{proposition}{Proposition}[section]
\newtheorem{Lemma}{Lemma}[section]
\newcommand{\R}{\mathbb R}
\newcommand{\N}{\mathbb N} 
\newcommand{\m}{\mathcal{M}}
\begin{document}
\baselineskip16pt
\title[The blow-up solutions for fractional heat equation ]{The Blow-up solutions for fractional heat equations on torus and Euclidean space}
\author{Divyang G. Bhimani}
\address{Department of Mathematics, Indian Institute of Science Education and Research, Dr. Homi Bhabha Road, Pune 411008, India}
\email{divyang.bhimani@iiserpune.ac.in}

\subjclass[2010]{35K05, 35K55, 42B35 (primary), 35A01 (secondary)}
\keywords{fractional heat equations, blow-up solution, modulation spaces,  Fourier amalgam spaces}
\date{}
\maketitle
\begin{abstract}  
We  produce  a  finite time  blow-up solution for  nonlinear  fractional heat equation ($\partial_t u + (-\Delta)^{\beta/2}u=u^k$) in modulation and Fourier amalgam  spaces on the torus $\mathbb T^d$ and  the Euclidean space $\mathbb R^d.$
This complements  several  known  local  and  small data global  well-posedness results in modulation spaces on $\mathbb R^d.$ Our method of proof  rely on the  formal solution of the equation.  This method should be further applied to other non-linear evolution equations.
 \end{abstract}
\section{Introduction}
 We study  heat equation  associated to fractional Laplacian 
 $(-\Delta)^{\beta/2}$ of the form 
\begin{eqnarray}\label{fh}
\begin{cases}\partial_t u + (-\Delta)^{\beta/2}u=u^k,  \qquad (t,x)\in \mathbb R^{+}\times \mathcal{M}\\
u(0, x)= u_0(x), 
\end{cases}
\end{eqnarray} 
where  $\mathcal{M}=\mathbb T^d$ (torus) or $\mathbb R^d$.  We  denote   $\widehat{\mathcal{M}}$  the Pontryagin dual of $\mathcal{M},$ i.e.  
 \begin{equation*}
 \widehat{\mathcal{M}}=
 \begin{cases} \mathbb Z^d \quad if \ \mathcal{M}= \mathbb T^d\\
 \mathbb R^d \quad if \ \mathcal{M}=\mathbb R^d.
 \end{cases}
 \end{equation*}
The fractional Laplacian $(-\Delta)^{\beta/2}$  is defined by 
\begin{eqnarray*}
\mathcal{F}[(-\Delta)^{\beta/2}u] (\xi) =  c_{\beta}|\xi|^{\beta} \mathcal{F}u (\xi) \qquad (\xi \in \widehat{ \mathcal{M}})
\end{eqnarray*}
where $\mathcal{F}$ denotes the Fourier transform and $c_{\beta}$ is some constant.  The fractional heat equation   \eqref{fh} is significantly interesting in both physics and PDEs, since it is the Poisson equation if  $\beta =1$ and the classical heat equation if   $\beta =2.$ In the later case   \eqref{fh}  appears  as a one-dimensional model for the voraticity equation of incompressible and viscous fluid of three dimension.  

In the 1980s, Feichtinger~\cite{feichtinger1983modulation} introduced the \textbf{modulation spaces} $M^{p,q}_s(\m)$ 
using short-time Fourier transform (STFT)~ \footnote{{STFT is} also known as windowed Fourier transform and is closely related to Fourier--Wigner and Bargmann transform.  See~\cite[Lemma 3.1.1]{grochenig2013foundations}  and~\cite[Proposition 3.4.1]{grochenig2013foundations} .}. The STFT of a $f\in \mathcal{S}'(\m)$ (space of tempered distributions, see e.g.  \cite[Part II]{RuzhanksyBook}) with
respect to a window function $0\neq g \in {\mathcal S}(\m)$ (Schwartz space) is defined by 
\begin{equation*}\label{stft}
V_{g}f(x,y)= \int_{\m} f(t) \overline{T_xg(t)} e^{- 2\pi i y\cdot t}dt, \ (x, y) \in \m \times \widehat{\m}
\end{equation*}
whenever the integral exists. Here, $T_xg(t)=g(tx^{-1})$ is the translation operator on $\m$.   The modulation spaces $M^{p,q}_s(\mathcal{M})$ is defined as follows:
\[M^{p,q}_s(\mathcal{M})=\left\{f\in \mathcal{S}'(\m):\|f\|_{M^{p,q}_s}= \left\| \|V_gf(x,y)\|_{L^p(\m)} \langle y \rangle^s \right\|_{L^q(\widehat{\m})}<\infty \right\}.\]
See also Remark \ref{evd}.  If $s=0,$ we write $M^{p,q}_0(\m)=M^{p,q}(\m).$  
It is known that
\begin{eqnarray*}
M^{p,q}_s(\m)=
\begin{cases} H^{s}(\m) \ \text{(Sobolev space)} \quad if \  $p=q=2$\\
\mathcal{F}L^q(\m) \ \text{(Fourier-Lebesgue space)} \quad if  \ \m=\mathbb T^d.
\end{cases}
\end{eqnarray*} 
In last two decades modulation spaces 
have turned out to be very fruitful in the study of various non-linear PDEs on $\mathbb R^d$,  see \cite{baoxiang2006isometric, wang2007global,wang2011harmonic, benyi2009local, bhimani2016functions, BhimaniHartree-Fock,BhimaniNorm, BhimaniAdM}.   
Iwabuchi  \cite[Theorems 1.9 and 1.13]{iwabuchi2010navier} proved local and  global well-posedness   of \eqref{fh}  with $\beta =2$  for small data in some weighted modulation spaces $M^{p,q}_s(\mathbb R^d)$.  Later  Chen-Deng-Ding-Fan \cite[Theorems 3.1, 1.5 and 1.6]{chen2012estimates} have  obtained some space-time estimates for heat  semigroup  $e^{-t (-\Delta)^{\beta/2}}$ in modulation spaces.  Further,  they  \cite[Theorems  1.5 and 1.6]{chen2012estimates}  proved  local and  global well-posedness (for small data)  of \eqref{fh}  with  any $\beta>0$ in some weighted modulation spaces, see also \cite[Theorem 3]{ru2014multilinear}. In \cite{huang2016critical, zheng1}  authors have found some critical  exponent in modulation spaces and provide some  local well-posedness and ill-posedness   for  \eqref{fh} in  weighted  modulation spaces.   Perhaps the best known  local well-posedness for \eqref{fh} read as follows: 
\begin{A1}[\cite{chen2012estimates}]\label{a}  Assume that $u_0 \in M^{p,1}(\R^d) \ (1\leq p \leq \infty).$  Then there exists $T_1>0$ such that  \eqref{fh} has a unique solution $u \in C([0, T_1), M^{p,1}(\R^d)).$
\end{A1}
 We note that though there is extensive literature on classical heat equation on $\mathbb R^d,$  only a few authors have studied \eqref{fh} on the torus $\mathbb T^d$ (in spite  of  periodic data is very important in  the analysis and in applications).   See e.g.  \cite{hiroheat}.  On the other hand, there has been good well-posedness theory developed for non-linear Schr\"odinger equations on torus $\mathbb T^d$ but not for \eqref{fh}.   We  are thus also interested to study time behaviour solution of \eqref{fh} for periodic data.  
 
 Recently in \cite{JH},   Forlano and Oh  have introduced \textbf{Fourier amalgam spaces} $\widehat{w}^{p,q}_s(\mathcal{M}) \ (1\leq p, q \leq \infty):$
$$\widehat{w}^{p,q}(\mathcal{M})=\left\{ f\in \mathcal{S}'(\mathcal{M}): \|f\|_{\widehat{w}^{p,q}}= \left\|\|\chi_{n+(-\frac{1}{2},\frac{1}{2}]^d} (\xi)\mathcal{F}f(\xi)\|_{L_{\xi}^p(\widehat{\mathcal{M}})}\right\|_{\ell_n^q(\mathbb Z^d)}< \infty \right\}.
$$
Their  motivation was to study  well-posedness of 1D cubic NLS in these spaces, \cite{forlano2020deterministic}.  See also Remark \ref{chr}.
 It is interesting to note that $\widehat{w}^{q,q}(\mathcal{M})=\mathcal{F}L^q(\mathcal{M}),$  $\widehat{w}^{2,q}(\mathcal{M})= M^{2,q}(\m)$ and $\widehat{w}^{p,1}(\mathcal{M}) \ (1\leq p \leq \infty)$
 is an algebra   under point-wise multiplication.   We also note that free fractional heat propagator  $e^{-t(-\Delta)^{\beta/2}}$ is uniformly bounded on $\widehat{w}^{p,q}(\mathcal{M}).$  Thus,  by the  standard fixed point argument we have the following local well-posedness theorem.
\begin{B1}\label{b}  Assume that $u_0 \in  \widehat{w}^{p,1}(\mathcal{M})\ (1\leq p \leq \infty).$  Then there exists $T_1>0$ such that  \eqref{fh} has a unique solution $u \in C([0, T_1),  \widehat{w}^{p,1}(\mathcal{M})).$
\end{B1}
For simplicity of presentation,  we let 
\[X(\mathcal{M})=M^{p,1}(\mathcal{M}) \ (1\leq p \leq 2) \quad \text{or} \quad \widehat{w}^{p,1}(\mathcal{M}) \ (1\leq p \leq \infty). \]

It is  now natural to ask whether   the local solutions established in Theorems A and B can be extended to  a global solution in time?  The purpose of the present paper is to answer this question.

In fact, for  $u_0 \in  X(\m),$  in view of Theorem A (see Proposition \ref{lw} below),  \eqref{fh} has  a unique solution  $u\in C([0, T^*), X(\m)),$ where $T^*=T^*(\|u_0\|_{X})$ denotes the  maximal  existence  of time  of solution. Moreover, we have  either $$T^*= \infty$$ or 
\[ T^{\ast}<\infty \quad \text{and} \quad \lim_{t\to T*} \sup \|u(t)\|_{X}=\infty.\]
In the previous case, we say that  the solution is global, while in the latter case we say that the solution \textbf{blows up in  the $M^{p,1}$ norm in  finite time }and  $T^{\ast}$ is called the blow-up time. We now state our  main theorem.
\begin{theorem}\label{dip}
Suppose that  $k\in \mathbb N, 0<r, \gamma, \beta  < \infty$ and $r^{d} v_d \geq 2^d,$   $v_d$ is the volume of unit ball $\{x\in \m:|x|\leq 1\}.$ Let $\widehat{u_0} \geq 0,  \widehat{u_0}\geq \gamma \chi_{B_0(r)},  u_0\in X (\m) \ (1\leq p \leq 2)$  and $\gamma^{k-1}\geq 4 r^{\beta} (k-1)e.$ Then \eqref{fh}  has  a unique blow-up solution in $X(\m),$ that is,  there exists a unique solution  $u(t)$  of \eqref{fh}  defined on $[0, T^*)$ such   that 
\[ T^*< \infty \quad \text{and} \quad  \limsup_{t\to T*} \|u(t)\|_{X}= \infty. \]
\end{theorem}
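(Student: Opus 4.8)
The plan is to pass to the Fourier side, where $e^{-t(-\Delta)^{\beta/2}}$ becomes multiplication by the nonnegative symbol $e^{-t|\xi|^{\beta}}$ and $u^{k}$ becomes a $k$-fold convolution, and then to reduce the blow-up to a scalar differential inequality. The starting point is the embedding $M^{p,1}(\R^{d})\hookrightarrow\mathcal F L^{1}(\R^{d})$, valid for $1\le p\le 2$ — it is here, and only here, that the restriction $p\le 2$ enters — which follows from $M^{p,1}\hookrightarrow M^{2,1}$, Plancherel, and Hölder on the unit cubes of a frequency partition of unity. Thus the solution $u$ furnished by Proposition~\ref{lw} satisfies $\widehat u(t)\in L^{1}$ with
\[
\int_{B_0(r)}\widehat u(t,\xi)\,d\xi\ \le\ \|\widehat u(t)\|_{L^{1}}\ \lesssim\ \|u(t)\|_{M^{p,1}}\qquad (0\le t<T^{\ast}),
\]
so it suffices to prove that $\int_{B_0(r)}\widehat u(t,\xi)\,d\xi\to\infty$ as $t$ approaches some finite time: since $t\mapsto\|u(t)\|_{M^{p,1}}$ is continuous on $[0,T^{\ast})$, this forces $T^{\ast}<\infty$, and the blow-up alternative recalled before the statement then gives $\limsup_{t\to T^{\ast}}\|u(t)\|_{M^{p,1}}=\infty$. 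Uniqueness is already part of Proposition~\ref{lw}.

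Two ingredients feed the scalar estimate. First, \emph{positivity propagates}. Fourier transforming the Duhamel formula for \eqref{fh} gives
\[
\widehat u(t,\xi)=e^{-t|\xi|^{\beta}}\widehat{u_0}(\xi)+\int_{0}^{t} e^{-(t-s)|\xi|^{\beta}}\,(\widehat u(s))^{\ast k}(\xi)\,ds ,
\]
where $(\widehat u(s))^{\ast k}:=\widehat u(s)\ast\cdots\ast\widehat u(s)$ and the identity $\mathcal F(u^{k})=(\widehat u)^{\ast k}$ is legitimate since $M^{p,1}$ is an algebra under pointwise multiplication and $\widehat u(s)\in L^{1}$; starting the Picard iteration from $0$, every iterate has a nonnegative Fourier transform (a nonnegative multiplier and convolutions of nonnegative $L^{1}$ functions preserve positivity), and passing to the limit in $M^{p,1}\hookrightarrow\mathcal S'$ yields $\widehat u(t,\xi)\ge 0$ on $[0,T^{\ast})$ — this is the role of $\widehat{u_0}\ge 0$. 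Second, a \emph{volume estimate} yields $\mu_{k}:=\inf_{|\xi|\le r}(\chi_{B_0(r)})^{\ast k}(\xi)\ge 1$: for $|\xi|\le r$ the set $B_0(r)\cap(\xi+B_0(r))$ contains $B(\xi/2,r/2)$, of measure $v_{d}r^{d}/2^{d}\ge 1$ by the hypothesis $r^{d}v_{d}\ge 2^{d}$; writing $(\chi_{B_0(r)})^{\ast k}=(\chi_{B_0(r)})^{\ast(k-1)}\ast\chi_{B_0(r)}$ and using nonnegativity of $(\chi_{B_0(r)})^{\ast(k-1)}$ together with $(\chi_{B_0(r)})^{\ast(k-1)}\ge\mu_{k-1}$ on $B_0(r)$ gives $\mu_{k}\ge\mu_{k-1}\,v_{d}r^{d}/2^{d}\ge\mu_{k-1}$, while $\mu_{1}=1$.

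Now set $b(t):=\inf_{|\xi|\le r}\widehat u(t,\xi)$ (essential infimum), so $b(0)\ge\gamma$. Restricting the Fourier-side Duhamel identity to $|\xi|\le r$, bounding $e^{-(t-s)|\xi|^{\beta}}\ge e^{-(t-s)r^{\beta}}$ there, and combining positivity with $(\widehat u(s))^{\ast k}\ge (b(s)\chi_{B_0(r)})^{\ast k}\ge \mu_{k}\,b(s)^{k}$ on $B_0(r)$, I obtain
\[
b(t)\ \ge\ e^{-tr^{\beta}}\gamma+\mu_{k}\int_{0}^{t} e^{-(t-s)r^{\beta}}\,b(s)^{k}\,ds\qquad(0\le t<T^{\ast}).
\]
Comparing, via the standard monotone iteration, with the solution $a$ of $a'=-r^{\beta}a+\mu_{k}a^{k}$, $a(0)=\gamma$, gives $b(t)\ge a(t)$ for as long as $a$ exists; the substitution $v=e^{tr^{\beta}}a$ turns this ODE into $v'=\mu_{k}e^{-(k-1)r^{\beta}t}v^{k}$, which is explicitly integrable. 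Since $\mu_{k}\ge 1$, the hypothesis $\gamma^{k-1}\ge 4(k-1)e\,r^{\beta}$ forces $\mu_{k}\gamma^{k-1}>r^{\beta}$, which is exactly the threshold for $v$, hence $a$, to become infinite at a finite time $T_{a}$, the precise constant in the hypothesis pinning down an explicit bound on $T_{a}$. Therefore $\int_{B_0(r)}\widehat u(t,\xi)\,d\xi\ge v_{d}r^{d}\,b(t)\ge v_{d}r^{d}\,a(t)\to\infty$ as $t\uparrow T_{a}$, and the reduction of the first paragraph concludes the proof.

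I expect the main difficulties to be technical rather than structural: identifying $\mathcal F(u^{k})$ with the honest convolution $(\widehat u)^{\ast k}$ and justifying the monotonicity of $g\mapsto g^{\ast k}$ on nonnegative functions at the regularity actually available for $u(t)$ (handled through the algebra property of $M^{p,1}$ and the $L^{1}$-regularity of $\widehat u(t)$ from the first paragraph), and tracking the constant $\mu_{k}$ and the blow-up time of the comparison ODE carefully enough to land precisely on the stated hypotheses $r^{d}v_{d}\ge 2^{d}$ and $\gamma^{k-1}\ge 4(k-1)e\,r^{\beta}$. The remaining steps are routine bookkeeping.
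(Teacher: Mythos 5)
Your argument is correct in outline, but it is a genuinely different route from the paper's. The paper never writes a scalar differential inequality: it constructs the solution as an explicit series of Picard-type iterates $u_1, u_2, \dots$ (Subsection \ref{fsn}), proves convergence for small time and continuation to the maximal interval (Lemmas \ref{ia}, \ref{me1}), compares the iterates of the continued solution with the original ones by a positivity/rearrangement argument on the Fourier side (Lemma \ref{cl}), and then proves the term-by-term lower bound $\widehat{u_i}(t,\xi)\gtrsim \gamma^{i}e^{-4r^{\beta}(i-1)t}t^{i-1}e^{-t|\xi|^{\beta}}\chi_{B_0(r)}(\xi)$, so that the series of $\mathcal{F}L^1$-norms diverges at $t=\tfrac{1}{4r^{\beta}(k-1)}$ precisely under $\gamma^{k-1}\geq 4r^{\beta}(k-1)e$ (Lemma \ref{fl}). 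You instead propagate nonnegativity of $\widehat u$ through the Picard iteration, bound $b(t)=\operatorname{ess\,inf}_{B_0(r)}\widehat u(t,\cdot)$ from below by a supersolution-type integral inequality, and compare with the ODE $a'=-r^{\beta}a+\mu_k a^{k}$. Both proofs rest on the same three pillars — the embedding $M^{p,1}\hookrightarrow\mathcal{F}L^{1}$ for $1\leq p\leq 2$ (the only use of $p\leq 2$ in the paper too), $\widehat{u_0}\geq 0$, and the volume condition $r^{d}v_d\geq 2^{d}$ giving $\chi_{B_0(r)}\ast\chi_{B_0(r)}\geq\chi_{B_0(r)}$ — but your ODE comparison replaces the paper's most delicate steps (the series bookkeeping of Lemma \ref{cl} and the induction of Lemma \ref{fl}) by a standard and arguably more robust mechanism; the paper's expansion, on the other hand, doubles as its existence/continuation argument and produces the blow-up time bound tied directly to the stated constant.

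A few points you should still nail down. First, positivity of $\widehat u$ must be carried across the whole maximal interval, not just the first contraction interval: restart the Picard iteration at later times (the local time depends only on $\|u(t_0)\|_{M^{p,1}}$) and pass nonnegativity through the $M^{p,1}\to L^1$ limit of $\widehat{u^{(n)}}$, as you indicate; also check measurability and local boundedness of $t\mapsto b(t)$ (upper semicontinuity of the essential infimum under $L^{1}$ convergence suffices) before integrating it, and for $k=1$ there is of course no blow-up — like the paper, you are implicitly assuming $k\geq 2$. Second, with the paper's normalization $\mathcal{F}(fg)=(2\pi)^{-d/2}\widehat f\ast\widehat g$, your effective $\mu_k$ picks up a factor $(2\pi)^{-d(k-1)/2}$, so the claim that the hypothesis $\gamma^{k-1}\geq 4(k-1)e\,r^{\beta}$ lands exactly on the ODE threshold holds only if one drops these constants — which is exactly what the paper itself does in Lemmas \ref{cl} and \ref{fl} — so this is a shared imprecision rather than a defect of your approach. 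Finally, as in the paper, the $\limsup$ statement at $T^{*}$ comes from the blow-up alternative attached to Proposition \ref{lw}, which is asserted in the introduction but not proved there; it follows in the standard way from the fact that the local existence time depends only on the size of the data, and it is worth saying so explicitly.
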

 We have initiated the study of blow-up analysis for  \eqref{fh} on torus $\mathbb T^d$ and  Theorem \ref{dip} thus is first finite time blow-up result in $\mathcal{F}L^1(\mathbb T^d)$ as far as we are aware. Theorem \ref{dip}  reveals that the  local solution established in  Theorem A  cannot be extended to a global solution in time.  Our method of proof uses  a formal solution of \eqref{fh} in terms of power series expansion. This formal solution satisfies the corresponding  Duhamel's form of \eqref{fh} (see Lemma \ref{me1}). Finally, by performing  the analysis on the Fourier transform side for component  of power series expansion, we establish crucial lower bound  for each component. This leads us to  blow-up solution in finite time (Lemma \ref{fl}).    We note that our method of proof is inspired by the recent work of Ru and Chen in \cite{ru2015blow},   where they established finite time-blow up for  a classical heat equation,  i.e.  \eqref{fh}
with $\beta=2,$ in  $\mathcal{F}L^1(\R^d).$

\begin{Remark}  (1) The function  
 $u_0(x)=  C e^{-2\pi |x|^2}$ for  $C> \sqrt[k-1]{4er^{\beta} (k-1)} e^{2\pi r^{\beta}}, r^d v_d  \geq 1$ satisfies the hypothesis of Theorem \ref{dip}.
(2) The function   $u_0(x)= C \frac{\sin x}{x},  x\in \R, C= \frac{1}{\pi} \sqrt[k-1]{4e(k-1)}$ satisfies the hypothesis of Theorem \ref{dip} for $p=2.$ (3) Taking Theorem A into account, the case $p>2$ in Theorem \ref{dip} remains interesting open question.  
\end{Remark}
\begin{Remark} The cubic NLS 
\begin{eqnarray*}
i\partial_{t}u+ \Delta u = |u|^2 u
\end{eqnarray*}
 is locally \cite{benyi2009local}  well-posed in $M^{p,1}(\mathbb R^d)$ but it is not yet clear whether  it is  globally well-posed or  there exist a blow-up solution, see for instance  open question raised by  Ruzhansky-Sugimoto-Wang  in 
\cite[p.280]{ruzhansky2012modulation}.  On the other hand,  Theorem \ref{dip} says that  we can produce   blow-up  solution of \eqref{fh} in $M^{p,1}(\R^d) \ (1\leq p\leq 2).$
\end{Remark}
We now briefly mention widely known literature.  Fujita \cite{fujita1966blowing} proved  that  for \eqref{fh} with $\beta=2$ and  $d(k-1)/2<1$  no non-negative global solution exists for     any non-trivial initial data  $u_0\in L^1(\R^d)$ (i.e.  every positive solution to this initial data problem blows up
in a finite time.) Later Fujita \cite{fujitasome} proved that, for $d(k-1)/2<1$,  global solutions do exist for initial data dominated by a sufficiently small  Gaussian.  For the critical exponent $d(k-1)/2=1,$ Hayakawa  \cite{hayakawa1973nonexistence}  proved
nonexistence of nonnegative and nontrivial global solutions in the case of  $d=1,2$, and Kobayashi-Sirao-Tanaka \cite{kobayashi1977growing}  proved it in general dimensions (i.e. finite time blow up case).  In \cite{ ru2015blow, brandolese2019blowup} authors have proved blow up in finite time  for  \eqref{fh} with $\beta=2$ in some scale invariant  Besov space and Fourier-Lebesgue  spaces. We note these  results deals with the classical heat equation  \eqref{fh}
with $\beta=2$ while in the present paper we could considered fractional heat equation \eqref{fh} with $\beta \neq 2$ as well.

\subsection{Further remarks}
\begin{Remark}B\'enyi and Okoudjou in \cite{benyi2009local} have established local well-posedness for nonlinear wave and Klein-Gordon (NLKG) equations in  $M^{p,1}(\mathbb R^d).$   Wang and Huzdik in \cite{wang2007global} have established small data global well-posedness for NLKG in  $M^{2,1}(\mathbb R^d)$.
Exploiting similar ideas of Section \ref{fsn}, we may establish  formal solution for the  wave and Klein-Gordon equations. The  only difference maybe at this stage is to replace fractional heat propagator  by  suitable free  Klein-Gordon and wave propagator. It is also further expected the method of proof of the present paper should be applicable to establish finite time blow up for these equations. 
\end{Remark}
\begin{Remark}\label{evd} Applying the frequency-uniform localization techniques,
one can get an equivalent definition of modulation spaces  \cite{wang2007global}  as follows.  
Let   $\rho \in \mathcal{S}(\mathbb R^d),$  $\rho: \mathbb R^d \to [0,1]$  be  a smooth function satisfying   $\rho(\xi)= 1 \  \text{if} \ \ |\xi|_{\infty}\leq \frac{1}{2} $ and $\rho(\xi)=
0 \  \text{if} \ \ |\xi|_{\infty}\geq  1.$ Let  $\rho_k$ be a translation of $\rho,$ that is, $ \rho_k(\xi)= \rho(\xi -k) \ (k \in \mathbb Z^d).$
Denote 
$\sigma_{k}(\xi)= \frac{\rho_{k}(\xi)}{\sum_{l\in\mathbb Z^{d}}\rho_{l}(\xi)}, \ (k \in \mathbb Z^d).$   
The frequency-uniform decomposition operators can be  defined by 
$$\square_k = \mathcal{F}^{-1} \sigma_k \mathcal{F}. $$
For $1\leq p, q \leq \infty, s\in \mathbb R,$  it is known \cite[Proposition 2.1]{wang2007global} that  
\begin{eqnarray*}\label{ed}
\|f\|_{M^{p,q}_s}\asymp  \left\|  \| \square_kf\|^q_{L^p_x(\m)} (1+ |k|)^s \right\|_{\ell_k^q}.
\end{eqnarray*}
The definition of the modulation space given above, is independent of the choice of 
the particular window function.  See \cite[Proposition 11.3.2(c)]{grochenig2013foundations}, \cite{wang2011harmonic}. 
\end{Remark}

\begin{Remark}\label{chr} For any given function $f$ which is locally in $B$  (Banach space) (i.e,  $gf\in B, \forall g \in C_0^{\infty}(\R^d)),$ we set $f_{B}(x)= \|f g(\cdot -x)\|_{B}.$ In \cite{Fei},  Feichtinger introduced Wiener amalgam space $W(B,C)$ with local component $B$  and global component $C$ (Banach space) is defined as the space of all functions $f$ locally in $B$ such that  $f_{B}\in C$. The space $W(B, C)$ endowed with the norm  $\|f\|_{W(B, C)}=\|f_{B}\|_{C}.$  Moreover, different choices of $g\in C^{\infty}_0(\R^d)$  generate the same space and yield equivalent norms.   We note that Fourier amalgam spaces is a Fourier image of particular Wiener amalgam spaces,  specifically,  $\mathcal{F}W(L^p, \ell^q_s)=\widehat{w}^{p,q}_s.$
\end{Remark}

The rest of the paper is organized as follows. In Section \ref{p}  recall required facts on modulation and Fourier amalgam spaces.  In Section \ref{fse} we prove Theorem \ref{dip}.

\section{Preliminaries}\label{p}  
 The notation $A \lesssim B $ means $A \leq cB$ for  some constant $c > 0 $.  The \textbf{Fourier-Lebesgue spaces} $\mathcal{F}L^p(\m)$ is defined by 
$$\mathcal{F}L^p(\m)= \left\{f\in \mathcal{S}'(\m): \|f\|_{\mathcal{F}L^{p}}:= \|\hat{f}\|_{L^{p}(\widehat{\m})}< \infty \right\}.$$
\begin{Lemma}[Basic Properties, see \cite{wang2011harmonic, grochenig2013foundations,ruzhansky2012modulation}, see  \cite{baoxiang2006isometric}  and   Corollary 2.7 in  \cite{benyi2009local}]  \label{rl} Let $p,q, p_{i}, q_{i}\in [1, \infty]$  $(i=1,2), s, s_1, s_2 \in \R.$ Then
\begin{enumerate}
\item \label{ir} $M^{p_{1}, q_{1}}_{s_1}(\mathbb R^{d}) \hookrightarrow M^{p_{2}, q_{2}}_{s_2}(\mathbb R^{d})$ whenever $p_{1}\leq p_{2}$ and $q_{1}\leq q_{2}$ and $s_2\leq s_1.$
\item \label{rcs} $M^{\min\{p', 2\}, p}(\mathbb R^d) \hookrightarrow \mathcal{F} L^{p}(\mathbb R^d)\hookrightarrow M^{\max \{p',2\},p}(\mathbb R^d),  \frac{1}{p}+\frac{1}{p'}=1.$
\item \label{ap} $M^{p,1}(\mathbb R^d)$ is an algebra under pointwise multiplication with norm inequality
$$\|fg\|_{M^{p,1}} \lesssim \|f\|_{M^{p,1}} \|g\|_{M^{p,1}}.$$
\item $\widehat{w}^{p_1, q_1}(\R^d)\subset \widehat{w}^{p_2,q_2}(\R^d)$
for $p_1\geq p_2$ and $q_1\leq  q_2$.
\item Let $\frac{1}{p_1}+ \frac{1}{p_2}=1+ \frac{1}{p}$ and $\frac{1}{q_1}+\frac{1}{q_2}=1+\frac{1}{q}.$ Then we have 
\[\|fg\|_{\widehat{w}^{p,q}}\lesssim \|f\|_{\widehat{w}^{p_1,q_1}}\|g\|_{\widehat{w}^{p_2,q_2}}.\]
In particular, 
$\widehat{w}^{p,q}(\mathcal{M})$ is an $ \mathcal{F}L^1$-module i.e. $\|fg\|_{\widehat{w}^{p,q}(\m)}\lesssim\|f\|_{\mathcal{F} L^1}\|g\|_{\widehat{w}^{p,q}(\m)}$.
\end{enumerate}
\end{Lemma}
\begin{proof}
We note that $\mathcal{F}W(L^p, \ell^q)= \widehat{w}^{p,q}.$ Since $$L^{p_1}\ast L^{p_2}\subset L^{p} \quad \text {and} \quad \ell^{q_1}\ast \ell^{q_2}\subset \ell^{q},$$
by \cite{Fei},  we have 
$
\|fg\|_{\widehat{w}^{p,q}}=\| \hat{f}\ast \hat{g}\|_{W(L^p, \ell^q)}  \lesssim \|\hat{f}\|_{W(L^{p_1}, \ell^{q_1})}\|\hat{g}\|_{W(L^{p_2}, \ell^{q_2})} \lesssim  \|f\|_{\widehat{w}^{p_1,q_1}}\|g\|_{\widehat{w}^{p_2,q_2}}.
$
\end{proof}
We refer to \cite{grochenig2013foundations}  for a classical foundation of these spaces and \cite{wang2011harmonic, ruzhansky2012modulation} for some recent developments for  nonlinear dispersive equations and the references therein. 
\subsection{Linear estimates}\label{le}
For $f\in \mathcal{S}(\m), t\in [0, \infty), 0<\beta <\infty,$ we define fractional  heat propagator  as follows
  $$U_{\beta}(t)f(x)=e^{t(-\Delta)^{\beta/2}}f(x)= \mathcal{F}^{-1} (e^{-t|\xi|^\beta}\mathcal{F}(\xi))(x)   \quad (x\in\mathcal{M},  \ \xi \in \widehat{\mathcal{M}}).$$ 
The next proposition shows that the uniform boundedness of  $U_{\beta}$ on modulation spaces. Specifically, we have following 
\begin{proposition}\label{di} Let   $1\leq p, q \leq \infty$ and $0<\beta < \infty.$ Then, for $f\in M^{p,q}(\R^d),$ we have 
\[  \|U_{\beta}(t)f\|_{M^{p,q}} \lesssim \|f\|_{M^{p,q}}.\]
\end{proposition}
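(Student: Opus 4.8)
The plan is to combine the frequency--uniform description \eqref{ed} of the modulation norm with the observation that $U_\beta(t)$ is a Fourier multiplier, hence commutes with each $\square_k$; this reduces the statement to a kernel estimate localised to a single unit cube, which is the strategy of Chen--Deng--Ding--Fan \cite{chen2012estimates}. Fix $\tilde\rho\in\mathcal S(\R^d)$ with $\tilde\rho\equiv1$ on $\{|\xi|_\infty\le1\}$ and $\operatorname{supp}\tilde\rho\subset\{|\xi|_\infty\le 3/2\}$, and put $\tilde\sigma_k(\xi)=\tilde\rho(\xi-k)$, so $\tilde\sigma_k\equiv1$ on $\operatorname{supp}\sigma_k$. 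Since $\widehat{\square_kU_\beta(t)f}=\sigma_k\,e^{-t|\cdot|^\beta}\widehat f=\big(\tilde\sigma_k\,e^{-t|\cdot|^\beta}\big)\widehat{\square_kf}$, we have $\square_kU_\beta(t)f=K_{k,t}\ast\square_kf$ with $K_{k,t}:=\mathcal F^{-1}\big(\tilde\sigma_k\,e^{-t|\cdot|^\beta}\big)$, so Young's inequality gives $\|\square_kU_\beta(t)f\|_{L^p}\le\|K_{k,t}\|_{L^1}\|\square_kf\|_{L^p}$. Raising this to the $q$-th power, summing over $k$, and invoking \eqref{ed}, it suffices to show
\[
M:=\sup_{k\in\Z^d,\ t\ge0}\ \|K_{k,t}\|_{L^1}<\infty ,
\]
after which $\|U_\beta(t)f\|_{M^{p,q}}\lesssim M\,\|f\|_{M^{p,q}}$ with a constant independent of $t$.

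When $|k|_\infty\ge2$ the cube $\operatorname{supp}\tilde\sigma_k$ avoids the origin, $|\xi|\sim|k|\gtrsim1$ there, and $e^{-t|\cdot|^\beta}$ is smooth on it. Applying the Fa\`a di Bruno formula to $e^{-t\phi}$ with $\phi(\xi)=|\xi|^\beta$, and using the homogeneity bound $|\partial^\mu\phi(\xi)|\lesssim|\xi|^{\beta-|\mu|}$ together with $\sup_{s\ge0}s^me^{-s}<\infty$, one gets $|\partial^\mu e^{-t|\xi|^\beta}|\lesssim|\xi|^{-|\mu|}\lesssim1$ for every $|\mu|\ge1$, uniformly in $t\ge0$. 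Since also $|D^\alpha\tilde\sigma_k|\le C_\alpha$ uniformly in $k$ and $\|\mathcal F^{-1}(\cdot)\|_{L^1}$ is invariant under frequency translations, the elementary estimate $\|\mathcal F^{-1}h\|_{L^1}\lesssim\sum_{|\alpha|\le L}\|\partial^\alpha h\|_{L^\infty}$ for $h$ supported in a fixed cube (valid for any integer $L>d$) yields $\|K_{k,t}\|_{L^1}\lesssim1$, uniformly in such $k$ and in $t$.

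The delicate case is the finitely many cubes with $|k|_\infty\le1$, where $0\in\operatorname{supp}\tilde\sigma_k$ and $|\xi|^\beta$ is not smooth at the origin (unless $\beta\in2\N$), and where, as $t\to\infty$, $e^{-t|\xi|^\beta}$ forms a transition layer of width $\sim t^{-1/\beta}$. Treat a fixed such $k$, say $k=0$. The functional $h\mapsto\|\mathcal F^{-1}h\|_{L^1}$ is invariant under dilations $h(\cdot)\mapsto h(R\,\cdot)$, so with $R_t:=t^{-1/\beta}$ one has $\|K_{0,t}\|_{L^1}=\|\mathcal F^{-1}\big(\tilde\rho(R_t\,\cdot)\,e^{-|\cdot|^\beta}\big)\|_{L^1}$. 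Cut this symbol with a fixed radial bump $\theta$ ($\theta\equiv1$ on $\{|\eta|\le1\}$, $\operatorname{supp}\theta\subset\{|\eta|\le2\}$) into a near part $\theta(\eta)\tilde\rho(R_t\eta)e^{-|\eta|^\beta}$ and a far part $(1-\theta(\eta))\tilde\rho(R_t\eta)e^{-|\eta|^\beta}$. On the far part $|\eta|\gtrsim1$, hence $e^{-|\cdot|^\beta}$ and all its derivatives are $\lesssim e^{-|\eta|^\beta/2}$, while $|\partial^\gamma[\tilde\rho(R_t\,\cdot)]|=R_t^{|\gamma|}|\partial^\gamma\tilde\rho(R_t\,\cdot)|\lesssim1$ on its support once $R_t\lesssim1$ (for $R_t\gtrsim1$ the far part lives on a fixed compact set); either way that piece has $\mathcal F^{-1}$ in $L^1$ with norm $\lesssim\sum_{|\mu|\le L}\|e^{-|\cdot|^\beta/2}\|_{L^1}\lesssim1$, uniformly. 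For the near part: when $R_t$ is small it is exactly the fixed function $\theta(\eta)e^{-|\eta|^\beta}$, while when $R_t$ is not small it equals $\tilde\rho(R_t\,\cdot)$ plus a correction $\tilde\rho(R_t\,\cdot)\big(e^{-|\cdot|^\beta}-1\big)$ which, undoing the dilation, becomes $\tilde\rho(\cdot)\big(e^{-s|\cdot|^\beta}-1\big)$ with $s=R_t^{-\beta}$ bounded above and $H^L$ norm $\lesssim s$; in all cases the near part is a compactly supported function lying in $H^L(\R^d)$ for every $L<\tfrac d2+\beta$, the only singularity being the $|\xi|^\beta$-type behaviour at $0$. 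Since $\beta>0$ we may fix $L\in(\tfrac d2,\tfrac d2+\beta)$, and then $H^L\hookrightarrow\mathcal F L^1$ closes the estimate with a bound uniform in $t$ (the intermediate regime $R_t\sim1$, i.e. $t\sim1$, is a compact range and is harmless). This yields $M<\infty$ and hence the proposition.

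The step I expect to be the main obstacle is exactly this low-frequency analysis: the embedding $H^L\hookrightarrow\mathcal F L^1$ needs $L>d/2$, but $|\xi|^\beta$ carries only $\sim\tfrac d2+\beta$ square-integrable derivatives at the origin, so one is forced to exploit the dilation invariance of $f\mapsto\|\mathcal F^{-1}f\|_{L^1}$ in order to keep the $t\to\infty$ boundary layer from destroying uniformity. (For the classical ranges there is no difficulty: when $0<\beta\le2$, $\mathcal F^{-1}(e^{-t|\cdot|^\beta})$ is an $\alpha$-stable probability density of $L^1$-norm $1$, and for $\beta\in2\N$ it is a Schwartz function whose $L^1$-norm is dilation invariant, so in both cases $U_\beta(t)$ is an $L^p$-contraction uniformly in $t$ and, commuting with $\square_k$, is bounded on every $M^{p,q}$.)
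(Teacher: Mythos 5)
Your argument is correct and follows essentially the same route as the paper's (sketched) proof: decompose via \eqref{ed}, commute $U_\beta(t)$ with $\square_k$, apply Young's inequality, and reduce to a uniform-in-$t$ $L^1$ bound for the localized kernels, treating cubes away from and near the origin separately --- the only difference being that you prove the two kernel estimates yourself (Fa\`a di Bruno plus $t^m s^m e^{-ts}\lesssim 1$ for the high cubes; dilation invariance of $\|\mathcal{F}^{-1}(\cdot)\|_{L^1}$ together with $H^L\hookrightarrow \mathcal{F}L^1$ for $d/2<L<d/2+\beta$ for the low cubes), whereas the paper delegates them to Lemmas 3.1 and 3.5 of \cite{chen2012estimates}. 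One small point to tidy: for the low cubes with $k\neq 0$, $|k|_\infty\leq 1$, the rescaled cutoff $\tilde\rho(R_t\cdot-k)$ is no longer identically $1$ on the near region, but since its derivatives are uniformly bounded for $R_t\lesssim 1$ it acts as a uniformly bounded pointwise multiplier on $H^L$, so your estimate goes through unchanged.
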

\begin{proof}  We sketch the proof, for detail see \cite[Theorem 3.1]{chen2012estimates}.
By  Remark \ref{evd}, we have
\[\|U_{\beta}(t)f \|_{M^{p,q}} \lesssim  \left( \sum_{ |k|<10} \| \square_k U_{\beta}(t)f\|^q_{L^p}  \right)^{1/q} +  \left( \sum_{|k|\geq 10} \| \square_k U_{\beta}(t)f\|^q_{L^p}  \right)^{1/q}. \]
We notice almost orthogonality \cite{baoxiang2006isometric, wang2007global}  relation for the frequency-uniform decomposition operators
 \begin{eqnarray}\label{op}
 \square_k= \sum_{\|\ell \|_{\infty}\leq 1} \square_{k+\ell}\square_{k} \ \ (k, \ell \in \mathbb Z^{d})
\end{eqnarray}
where $\|\ell\|_{\infty}= \max \{|\ell_i|:\ell_i \in \mathbb Z, i=1,..., d\}.$  
By \eqref{op}, for $|k|\geq 10,$ we have 
\begin{eqnarray*}
 \square_kU_{\beta}(t)f  & =  & \sum_{\|\ell \|_{\infty}\leq 1} \square_{k+\ell}\square_{k}U_{\beta}(t)f.
\end{eqnarray*}
Using this, Young inequality  and \cite[Lemma 3.1]{chen2012estimates}  give 
\begin{eqnarray*}
\|\square_kU_{\beta}(t)f\|_{L^p} \lesssim  \sum_{|\ell|_{\infty} \leq 1}\| (\sigma_{k+\ell} e^{-t|\xi|^{\beta}} )^{\vee}\|_{L^1} \|\square_{k}f\|_{L^p} \lesssim \|\square_kf\|_{L^p}.
\end{eqnarray*} 
Exploiting  the proof of   \cite[Lemma 3.5]{chen2012estimates}, we   may obtain  $\|\square_kU_{\beta}(t)f\|_{L^p} \lesssim  \|\square_kf\|_{L^p}$ for $|k|<10.$ Combining the above inequalities, we obtain the desired inequality. 
\end{proof}
\section{The  proof of Theorem \ref{dip}}\label{fse}
In this section we prove Theorem \ref{dip}.  We start with introducing the formal solution of \eqref{fh} in the  next subsection.
\subsection{The formal solution of  fractional heat equation}{\label{fsn}}
In this subsection, we introduce the formal solution of fractional heat equation  \eqref{fh}. Assume that $k=2$ and 
\begin{eqnarray*}
u_1(t) & =  & U_{\beta}(t)u_0,\\
u_2(t) & =  & \int_0^t U_{\beta} (t-s) u_1^2 ds,\\
u_3(t) & = &  \int_0^t U_{\beta} (t-s) 2u_1 u_2 ds,\\
&&  ....,\\
u_{2n}(t) & = & \int_0^t U_{\beta} (t-s) (2u_1u_{2n-1} + \cdots + 2 u_{n-1} u_{n+1} + u_n^2) ds,\\
u_{2n+1}(t) & = & \int_0^t U_{\beta} (t-s) (2u_1u_{2n} + \cdots + 2 u_{n-1} u_{n+2} +2 u_n u_{n+1}) ds,\\
&& \cdots.
\end{eqnarray*}
Then $u= \sum_{i=1}^{\infty} u_i$ (formal solution) formally satisfies  the integral equation
\[  u(t) = U_{\beta} (t) u_0 + \int_0^t\ U_{\beta} (t-s) u^2(s) ds.\]
In fact, we have 
\begin{eqnarray*}
\sum_{i=1}^{\infty} \widehat{u_i} & = & \widehat{ u_1} + \int_0^t e^{-(t-s) |\xi|^{\beta}} \mathcal{F}(u_1^2) ds +  \int_0^{t} e^{-(t-s)|\xi|^{\beta}} \mathcal{F}(2u_1u_2) ds +...\\
&& + \int_0^t e^{-(t-s) |\xi|^{\beta}} \mathcal{F}(2u_1 u_{2n}+...+ 2u_nu_{n+1}) ds+ ...\\
& = & \widehat{u_1} +\int_0^t e^{-(t-s)|\xi|^{\beta}} \mathcal{F} (u_1^2+ 2 u_1u_2+...) ds\\
& = & \widehat{u_1} + \int_0^t e^{- (t-s) |\xi|^{\beta}} \mathcal{F} \left( \sum_{i=1}^{\infty} u_i \right)^{2} ds.
\end{eqnarray*}
By taking inverse Fourier transform on two sides, we can obtain that 
\[u(t) =U_{\beta}(t)u_{0} + \int_0^t U_{\beta}(t-s) u^2 ds.\]
\begin{Remark} The formal analysis (formal solution for \eqref{fh}) performed above will be made rigorous in the following subsections.
\end{Remark}
\begin{Lemma} For  $f(u)=u^k,$ assume 
\begin{eqnarray*}
u_1(t) & =  & U_{\beta}(t)u_0,\\
u_k(t) & =  & \int_0^t U_{\beta} (t-s) u_1^k ds,\\
u_{2k-1}(t) & = &  \int_0^t U_{\beta} (t-s) C_k^1u_1^{k-1} u_k ds,\\
u_{3k-2}(t) & = & \int_0^t U_{\beta} (t-s) (C_k^2 u_1^{k-2} u_k^2+C_k^1u_1^{k-1} u_{2k-1}) ds,\\
u_{4k-3}(t) & = & \int_0^t U_{\beta} (t-s) (C_k^{3} u_1^{k-1} u_k^3+ C_k^1u_1^{k-1} u_{3k-2}) ds,\\
&& \cdots\\
u_{jk-(j-1)} (t) & = & \int_0^t U_{\beta} (t-s)  \left( \sum_{\Lambda_j}  (u_{i_1}...u_{i_k}) \right) ds,\\
&&   ...,
\end{eqnarray*}
\end{Lemma}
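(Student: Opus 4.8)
The plan is to repeat, for a general power $k\ge 2$, the formal resummation carried out above in the case $k=2$. Write $a_j:=j(k-1)+1$ for $j\ge 0$, so that $a_0=1,\ a_1=k,\ a_2=2k-1,\dots$ are exactly the indices occurring in the statement, and set $u:=\sum_{j\ge 0}u_{a_j}$ (with $u_i:=0$ for every $i\notin\{a_j:j\ge 0\}$). The first step is to identify $\Lambda_j$ explicitly: if $i_m=a_{l_m}$ for $m=1,\dots,k$, then
\begin{equation*}
i_1+\cdots+i_k=\Big(\sum_{m=1}^{k}l_m\Big)(k-1)+k,
\end{equation*}
and this equals $a_j=j(k-1)+1$ precisely when $l_1+\cdots+l_k=j-1$. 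Thus $\Lambda_j$ is, with multiplicities, the set of ordered $k$-tuples $(l_1,\dots,l_k)\in(\N\cup\{0\})^{k}$ with $l_1+\cdots+l_k=j-1$; grouping such tuples by their unordered type produces exactly the multinomial coefficients $C_k^1,C_k^2,\dots$ displayed in the recursion. Hence the defining relation can be rewritten as
\begin{equation*}
u_{a_j}(t)=\int_0^{t}U_{\beta}(t-s)\Big(\sum_{\substack{l_1+\cdots+l_k=j-1\\ l_m\ge 0}}u_{a_{l_1}}(s)\cdots u_{a_{l_k}}(s)\Big)\,ds,\qquad j\ge 1 .
\end{equation*}

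Next I would sum this over all $j\ge 1$ and move the (formal) summation past the integral and past the linear propagator $U_{\beta}(t-s)$, getting
\begin{equation*}
u(t)-u_1(t)=\int_0^{t}U_{\beta}(t-s)\Big(\sum_{j\ge 1}\ \sum_{\substack{l_1+\cdots+l_k=j-1\\ l_m\ge 0}}u_{a_{l_1}}(s)\cdots u_{a_{l_k}}(s)\Big)\,ds .
\end{equation*}
Since every tuple $(l_1,\dots,l_k)\in(\N\cup\{0\})^{k}$ occurs for exactly one value of $j$, namely $j=1+l_1+\cdots+l_k$, re-indexing the inner double sum by the tuple and using the $k$-fold Cauchy product gives
\begin{equation*}
\sum_{j\ge 1}\ \sum_{\substack{l_1+\cdots+l_k=j-1\\ l_m\ge 0}}u_{a_{l_1}}(s)\cdots u_{a_{l_k}}(s)=\Big(\sum_{l\ge 0}u_{a_l}(s)\Big)^{k}=u(s)^{k}.
\end{equation*}
As $u_1(t)=U_{\beta}(t)u_0$, this is the Duhamel identity
\begin{equation*}
u(t)=U_{\beta}(t)u_0+\int_0^{t}U_{\beta}(t-s)\,u(s)^{k}\,ds,
\end{equation*}
which is the assertion; applying $\mathcal F$ and using $\mathcal F[U_{\beta}(t-s)g]=e^{-(t-s)|\xi|^{\beta}}\widehat g$ reproduces the Fourier-side version $\widehat u=\sum_i\widehat{u_i}$ written in the text, exactly as for $k=2$.

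The delicate point is the legitimacy of these rearrangements: at this stage the series $\sum_{j}u_{a_j}$, the termwise integration, and the Cauchy-product identity $\big(\sum_l u_{a_l}\big)^{k}=\sum_j\sum_{\Lambda_j}u_{a_{l_1}}\cdots u_{a_{l_k}}$ are used purely formally, and they become valid once one works in a space where the series converges absolutely. This is where the algebra property of $M^{p,1}(\R^d)$ (Proposition \ref{ap}) and the uniform boundedness of $U_{\beta}(t)$ on $M^{p,1}(\R^d)$ (Proposition \ref{di}) enter: applying them iteratively shows that $A_j:=\sup_{0\le t\le T}\|u_{a_j}(t)\|_{M^{p,1}}$ obeys $A_0\lesssim\|u_0\|_{M^{p,1}}$ and $A_j\lesssim T\sum_{l_1+\cdots+l_k=j-1}A_{l_1}\cdots A_{l_k}$, so that $\sum_j A_j$ is majorized by the value at $x=1$ of the analytic solution $\Phi$ of an equation $\Phi(x)=c_0+c_1 x\,\Phi(x)^{k}$ with $c_0\sim\|u_0\|_{M^{p,1}}$, $c_1\sim T$, and is therefore finite for $T$ small. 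I would accordingly present this Lemma at the formal level (as the paper does for $k=2$) and postpone this quantitative estimate to the local well-posedness argument in Subsection \ref{lws}. The combinatorial identification of $\Lambda_j$ is elementary; the main obstacle is precisely this convergence bookkeeping, which is what makes the passage from the formal to the genuine solution work.
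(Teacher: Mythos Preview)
Your proposal is correct and follows the same route as the paper: the paper works out the formal resummation explicitly only for $k=2$ (on the Fourier side) and then states this Lemma for general $k$ without a separate proof, expecting the reader to repeat the same manipulation. Your write-up supplies exactly that generalization, with a cleaner combinatorial identification of $\Lambda_j$ via the reparametrization $a_j=j(k-1)+1$ and the $k$-fold Cauchy product, and your remark that the rearrangements are purely formal here and are justified later via Propositions \ref{di} and \ref{ap} matches the paper's organization (see the Remark preceding the Lemma and Subsection \ref{lws}).
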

where  \[\Lambda_j = \left\{ (i_1,..., i_k): i_1+\cdots + i_k= jk-(j-1), i_m=tk-(t-1), 0\leq t<j, t\in \mathbb N, m=1,...,k \right\}.\]
Then $u= \sum_{i=1}^{\infty} u_{ik-(i-1)}$ (formal solution) formally satisfies  the integral equation
\[  u(t) = U_{\beta} (t) u_0 + \int_0^t\ U_{\beta} (t-s) f(u(s)) ds.\]

\subsection{Local well-posedness in $X(\m)$}\label{lws}
\begin{proposition}[local well-posedness]\label{lw}  Assume that $u_0 \in X(\m) \ (1\leq p \leq \infty).$  Then there exists $T_1>0$ such that  \eqref{fh} has a unique solution $u \in C([0, T_1), X(\m)).$
\end{proposition}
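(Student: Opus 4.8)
The plan is to run a standard Banach fixed-point argument on the Duhamel (mild) formulation
\[
u(t) = \Phi(u)(t) := U_{\beta}(t)u_0 + \int_0^t U_{\beta}(t-s)\, u^k(s)\, ds ,
\]
in the complete metric space $X_T = C([0,T], M^{p,1}(\R^d))$ with norm $\|u\|_{X_T} = \sup_{0\le t\le T}\|u(t)\|_{M^{p,1}}$, restricted to the closed ball $B_R = \{u\in X_T : \|u\|_{X_T}\le R\}$ with $R := 2C_0\|u_0\|_{M^{p,1}}$, where $C_0$ is the constant from Proposition \ref{di}. First I would check that $\Phi$ maps $B_R$ into itself for $T$ small: by Proposition \ref{di} the free term satisfies $\|U_{\beta}(t)u_0\|_{M^{p,1}}\le C_0\|u_0\|_{M^{p,1}}$ uniformly in $t\ge0$, while for the Duhamel term, using Minkowski's integral inequality, Proposition \ref{di} once more, and the algebra property of $M^{p,1}$ (Proposition \ref{ap}, iterated $k-1$ times),
\[
\Big\| \int_0^t U_{\beta}(t-s)u^k(s)\,ds \Big\|_{M^{p,1}} \lesssim \int_0^t \|u(s)\|_{M^{p,1}}^{k}\, ds \le T\, R^k .
\]
Hence $\|\Phi(u)\|_{X_T}\le \tfrac{R}{2} + C T R^k$, which is $\le R$ as soon as $T\le (2C R^{k-1})^{-1}$.

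Next I would show that $\Phi$ is a contraction on $B_R$. Writing $a^k-b^k = (a-b)\sum_{j=0}^{k-1}a^{j}b^{k-1-j}$ and invoking the algebra property again yields
\[
\|u^k(s)-v^k(s)\|_{M^{p,1}} \lesssim \|u(s)-v(s)\|_{M^{p,1}}\big(\|u(s)\|_{M^{p,1}}^{k-1}+\cdots+\|v(s)\|_{M^{p,1}}^{k-1}\big) \lesssim R^{k-1}\|u(s)-v(s)\|_{M^{p,1}} ,
\]
so that $\|\Phi(u)-\Phi(v)\|_{X_T}\le C' T R^{k-1}\|u-v\|_{X_T}$; shrinking $T$ further forces $C'TR^{k-1}<\tfrac12$. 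The Banach fixed-point theorem then produces a unique $u\in B_R$ with $\Phi(u)=u$, and a routine comparison/continuation argument (using the same Lipschitz estimate on arbitrary $M^{p,1}$-balls) promotes this to uniqueness in all of $C([0,T_1), M^{p,1})$; one sets $T_1$ equal to the $T$ obtained above.

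The one point that is \emph{not} purely mechanical — and the step I expect to be the main obstacle — is verifying that $\Phi(u)$ actually lies in $C([0,T], M^{p,1})$, i.e. the strong continuity in $t$ of the free evolution $t\mapsto U_{\beta}(t)u_0$ and of the Duhamel integral in the $M^{p,1}$-norm. Since $U_{\beta}(t)$ is the Fourier multiplier with symbol $e^{-t|\xi|^\beta}\to 1$ as $t\to0^+$, continuity at $t=0$ is the delicate case; for $1\le p<\infty$ it follows from the uniform bound of Proposition \ref{di} together with density of $\mathcal{S}(\R^d)$ in $M^{p,1}(\R^d)$ (Lemma \ref{rl}\eqref{dpp}) via an $\varepsilon/3$ argument, after checking $\|U_{\beta}(t)\phi-\phi\|_{M^{p,1}}\to0$ for $\phi\in\mathcal{S}$ (e.g.\ through the equivalent norm \eqref{ed} and dominated convergence on each frequency block $\square_k$). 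Continuity on $(0,T]$ and continuity of the Duhamel term then follow from the semigroup property, the uniform bound, and dominated convergence. For the endpoint $p=\infty$ the continuity should be read in the weak-$*$ sense $\sigma(M^{\infty,1},M^{1,\infty})$, since $\mathcal S$ is no longer dense; in any event the range $1\le p\le 2$ needed for Theorem \ref{dip} is fully covered by the density argument.
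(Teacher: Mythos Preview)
Your argument is correct and follows essentially the same route as the paper's own proof: Duhamel formulation, ball in $C([0,T],M^{p,1})$ of radius $2C_0\|u_0\|_{M^{p,1}}$, self-map via Proposition~\ref{di} and the algebra property (Proposition~\ref{ap}), contraction via the factorization $a^k-b^k$, then Banach's fixed-point theorem. You are in fact more careful than the paper, which does not address the strong continuity $t\mapsto U_\beta(t)u_0$ in $M^{p,1}$ at all; your density-plus-uniform-bound argument for $1\le p<\infty$ and your remark on the $p=\infty$ endpoint are the right way to fill that gap.
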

\begin{proof} By Duhamle principle,  \eqref{fh} can be rewritten as 
\begin{equation} \label{fh1}
u(\cdot, t)= U_{\beta}(t)u_{0}+\int_{0}^{t}U_{\beta}(t-s) f(u) ds:=\mathcal{J}(u),
\end{equation}
where $U_{\beta}(t)=e^{-t (-\Delta)^{\beta/2}}.$
We show that  the mapping  $\mathcal{J}$
has a unique fixed point in an appropriate functions space, for small $t$. 
For this, we consider the Banach space $X_{T}=C([0, T],  M^{p,1}(\R^d))$, with norm
$$\left\|u\right\|_{X_{T}}=\sup_{t\in [0, T]}\left\|u(\cdot, t)\right\|_{X}, \ (u\in X_{T}).$$ 
By Minkowski's inequality for integrals and Propositions \ref{di} and  Lemma \ref{rl} \refeq{ap}, we obtain
\begin{eqnarray*}
\left\| \int_{0}^{t} U_{\beta}(t-s)  u^k(s)  \, ds \right\|_{X} 
   &\leq & T  \, \| u^k(t)\|_{X}  \leq  T\|u\|_{C([0, T], X)}^{k}.
\end{eqnarray*}
By  Proposition \ref{di}, and using above inequality, we have
\begin{eqnarray*}
\|\mathcal{J}u\|_{C([0, T], X)} \leq   \|u_{0}\|_{X} + c T \|u\|_{X}^{k},
\end{eqnarray*}
for some universal constant $c.$
 For $M>0$, put  $$B_{T, M}= \{u\in C([0, T],  X)):\|u\|_{C([0, T], M^{p,1})}\leq M \},$$  which is the  closed ball  of radius $M$, and centered at the origin in  $C([0, T],  X)$.  
Next, we show that the mapping $\mathcal{J}$ takes $B_{T, M}$ into itself for suitable choice of  $M$ and small $T>0$. Indeed, if we let, $M= 2\|u_{0}\|_{X}$ and $u\in B_{T, M},$ it follows that 
\begin{eqnarray*}
\|\mathcal{J}u\|_{C([0, T],  X)} \leq  \frac{M}{2} + cT M^{3}.
\end{eqnarray*}
We choose a  $T$  such that  $c TM^{2} \leq 1/2,$ that is, $T \leq \frac{1}{2cM^2}$ and as a consequence  we have
\begin{eqnarray*}
\|\mathcal{J}u\|_{C([0, T],  X)} \leq \frac{M}{2} + \frac{M}{2}=M,
\end{eqnarray*}
that is, $\mathcal{J}u \in B_{T, M}.$
By  Proposition \ref{ap}, and the arguments as before, we obtain
\begin{eqnarray*}
\|\mathcal{J}u- \mathcal{J}v\|_{C([0, T],  X)} \leq \frac{1}{2} \|u-v\|_{C([0, T],  X)}.
\end{eqnarray*}
Therefore, using the  Banach's contraction mapping principle, we conclude that $\mathcal{J}$ has a fixed point in $B_{T, M}$ which is a solution of \eqref{fh1}. 
\end{proof}

\subsection{Finite time blow-up in $X(\m)$}\label{ftb}
In this subsection, we prove Theorem \ref{dip}. To this end, we start with flowing technical lemmas.
\begin{Lemma}\label{ia}  There exists $\epsilon_0$ such that   if  $T_2 \|u_1\|_{L^{\infty}_{T_2}(X)} < T_2 \|u_0\|_{X} < \epsilon_0$ then \[\sum_{i=1}^{\infty} \|u_{i}\|_{L^{\infty}_{T_2}(X)} < \infty.\]
\end{Lemma}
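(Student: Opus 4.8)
The plan is a Picard-iteration / majorant-series estimate: control the $M^{p,1}$-norms of the iterates $u_i$ by the terms of a scalar sequence obeying an algebraic recursion, and show this scalar sequence is summable once $T_2\|u_0\|_{M^{p,1}}$ is small enough.

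The first step is to record the two structural inputs. Write $A_i:=\|u_i\|_{L^\infty_{T_2}(M^{p,1})}$. By Proposition \ref{di} there is $c_0\ge1$ with $\|U_\beta(t)g\|_{M^{p,1}}\le c_0\|g\|_{M^{p,1}}$ for all $t$, and iterating Proposition \ref{ap} there is $c_1\ge1$ with $\|g_1\cdots g_k\|_{M^{p,1}}\le c_1^{\,k-1}\|g_1\|_{M^{p,1}}\cdots\|g_k\|_{M^{p,1}}$; in particular all $u_i$ lie in $M^{p,1}$ by induction. Feeding these bounds into the Duhamel recursion defining the $u_i$ — the integrand producing $u_n$ being a $k$-linear sum of products $u_{i_1}\cdots u_{i_k}$ with $i_1+\cdots+i_k=n$ (the binomial weights $C_k^j$ in the statement just count the orderings, and discarding the identically-zero iterates when $k\ge 3$, together with relaxing the admissibility of the $i_m$, only enlarges the right-hand side) — and using Minkowski's inequality in time together with $\int_0^t ds\le T_2$, one obtains $A_1\le\|u_0\|_{M^{p,1}}$ (from the hypothesis $\|u_1\|_{L^\infty_{T_2}(M^{p,1})}<\|u_0\|_{M^{p,1}}$) and, with $C:=c_0c_1^{\,k-1}T_2$,
$$ A_n\ \le\ C\sum_{\substack{i_1+\cdots+i_k=n\\ i_m\ge1}}A_{i_1}\cdots A_{i_k}\qquad(n\ge 2). $$
The essential feature is that every $i_m$ on the right is $\le n-(k-1)<n$.

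The second step is a scalar comparison. Put $S_N:=\sum_{i=1}^N A_i$. Summing the recursion over $2\le n\le N$ and noting every index occurring is $\le N-1$, the $k$-fold convolution sums collapse into $(\sum_{i\le N-1}A_i)^k$, giving the closed inequality $S_N\le\|u_0\|_{M^{p,1}}+C\,S_{N-1}^{\,k}$. Hence $(S_N)$ is dominated termwise by the increasing iteration $\sigma_1=\|u_0\|_{M^{p,1}}$, $\sigma_{N}=\|u_0\|_{M^{p,1}}+C\sigma_{N-1}^{\,k}$, which — the map $\sigma\mapsto\|u_0\|_{M^{p,1}}+C\sigma^k$ being increasing — remains below the smallest positive fixed point $X_\ast$ of $g(\sigma):=\|u_0\|_{M^{p,1}}+C\sigma^{k}-\sigma=0$, provided such a fixed point exists. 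Since $g(0)>0$ and $g$ attains its minimum at $\sigma_\ast=(kC)^{-1/(k-1)}$ with $g(\sigma_\ast)=\|u_0\|_{M^{p,1}}-\tfrac{k-1}{k}(kC)^{-1/(k-1)}$, a fixed point exists as soon as $\|u_0\|_{M^{p,1}}\le\tfrac{k-1}{k}(kC)^{-1/(k-1)}$, i.e. (recalling $C=c_0c_1^{\,k-1}T_2$) as soon as $T_2\|u_0\|_{M^{p,1}}^{\,k-1}$ lies below a constant $\epsilon_0$ depending only on $d,p,\beta$; for $k=2$ this reads exactly $T_2\|u_0\|_{M^{p,1}}<\epsilon_0$, the stated hypothesis. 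Under this smallness $S_N\le X_\ast$ for every $N$, whence $\sum_{i=1}^\infty\|u_i\|_{L^\infty_{T_2}(M^{p,1})}=\sup_N S_N\le X_\ast<\infty$. (Equivalently, one may argue through the generating function $F(z)=\sum_i A_iz^i$, which the triangular recursion forces to be majorized coefficientwise by the Fuss--Catalan series solving $Y=\|u_0\|_{M^{p,1}}z+CY^k$, convergent at $z=1$ under the same condition.)

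The step I expect to be the real work is the first one: translating the binomial bookkeeping of the iterated Duhamel integrals (the sets $\Lambda_j$, the weights $C_k^j$) into the clean $k$-linear recursion above, making sure that (i) all indices on the right drop strictly, so the comparison can be run by induction, and (ii) the constant $C$ carries a full factor of $T_2$, so that smallness of the time — not of the data — closes the argument. After that, the scalar-iteration comparison in the second step is routine, and $\epsilon_0$ is simply read off from the discriminant-type condition $g(\sigma_\ast)\le 0$.
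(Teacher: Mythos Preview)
Your argument is correct, and it is essentially the same majorant--series strategy as in the paper: bound the $M^{p,1}$--norms $A_i$ by a scalar convolution recursion coming from the algebra property and the uniform $U_\beta$--bound, and show the scalar series is summable under the smallness hypothesis on $T_2\|u_0\|_{M^{p,1}}$.

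Where you diverge from the paper is in how the scalar recursion is closed. The paper sets $C=1/(2C_0T_2)$, posits the ansatz $a_m<C^m/m^{1+\epsilon}$, and verifies it by induction via an elementary estimate on the discrete convolution $\sum_{j}\bigl(j(2i+1-j)\bigr)^{-(1+\epsilon)}$, choosing $\epsilon$ at the end so that the three pieces of the sum are each below $1/3$. You instead collapse the convolution recursion into the closed inequality $S_N\le\|u_0\|_{M^{p,1}}+C S_{N-1}^{\,k}$ and invoke the smallest fixed point of $\sigma\mapsto\|u_0\|_{M^{p,1}}+C\sigma^k$. Your route is cleaner and more robust: it avoids the somewhat delicate bookkeeping of the paper's induction (the factors of $2$, the split into $j\le 2$ and $j\ge 3$, the choice of $\epsilon$), and it yields an explicit threshold $\epsilon_0$ in closed form from the discriminant condition $g(\sigma_\ast)\le 0$. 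The paper's approach, on the other hand, gives slightly more information on the \emph{rate} of decay of the individual terms ($a_m\lesssim m^{-(1+\epsilon)}$), which your fixed--point argument does not produce directly --- though that extra information is not used downstream.

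One minor remark: in your first step you correctly observe that for $k\ge 3$ only the indices $1+j(k-1)$ carry nonzero iterates, and that extending the summation to all tuples $(i_1,\dots,i_k)$ with $i_1+\cdots+i_k=n$, $i_m\ge 1$ only enlarges the bound. This is exactly the point where the paper's index sets $\Lambda_j$ (with the constraint $i_m=tk-(t-1)$) are relaxed, and it is what makes the uniform recursion $A_n\le C\sum A_{i_1}\cdots A_{i_k}$ valid for all $n\ge 2$.
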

\begin{proof}
 Taking notations of Subsection \ref{fsn} and  Proposition \ref{di} and Lemma \ref{rl} \eqref{ap}  into account, we have 
 \begin{eqnarray*}
 \|u_1\|_{L^{\infty}_{T_2}(X)}  = \|U_{\beta}(\cdot)u_0\|_{L^{\infty}_{T_2}(X)} \leq C \|u_0\|_{X},   
 \end{eqnarray*}
\begin{eqnarray*}
\|u_2\|_{L^{\infty}_{T_2}(X)} 
& \leq & C_{0} \left\| \int_0^t  \|u_1 \|^2_{X} ds\right\|_{L^{\infty}_{T_2}} \leq  C_0 T_2  \|u_1\|^2_{L^{\infty}_{T_2}(X)},
\end{eqnarray*} 
\begin{eqnarray*}
\|u_3\|_{L^{\infty}_{T_2}(X)} 
& \leq & 2 C_0 T_2  \|u_1\|_{L^{\infty}_{T_2}(X)}  \|u_2\|_{L^{\infty}_{T_2}(X)},
\end{eqnarray*} 
\begin{eqnarray*}
\|u_4\|_{L^{\infty}_{T_2}(X)} 
& \leq & 2C_0 T_2  \|u_1\|_{L^{\infty}_{T_2}(X)}  \|u_3\|_{L^{\infty}_{T_2}(X)} + C_0 T_2  \|u_2\|^2_{L^{\infty}_{T_2}(X)},
\end{eqnarray*}
 \[\cdots.\]
Notice that  the norm $\|u_{2}\|_{L^{\infty}_{T_2}(X)}$ can be controlled by   the norm  $\|u_{1}\|_{L^{\infty}_{T_2}(X)}$ and so the norms of $\|u_{i}\|_{L^{\infty}_{T_2}(X)}$ also can be controlled by $\|u_{1}\|_{L^{\infty}_{T_2}(X)}$. In view of this,  we have 
\[\sum_{i=1}^{\infty} \|u_{i}\|_{L^{\infty}_{T_2}(X)} \leq S \]
where
\begin{multline*}  
 S:=\|u_1\|_{L^{\infty}_{T_2}(X)} + C_0 T_2 \| u_1\|^2_{L^{\infty}_{T_2}(X)} + C_0 T_2  \|u_1\|_{L^{\infty}_{T_2}(X)}  \|u_2\|_{L^{\infty}_{T_2}(X)}+ \cdots.
 \end{multline*}
Next we  claim that there exists $\epsilon_0$ such that if the initial data $T_2 \|u_0\|_{X} < \epsilon_0,$ then  $S<\infty.$
  To justify the claim, we  let
\[ C= \frac{1}{2C_0T_2},\quad a_1 =\|u_1(t)\|_{L^{\infty}_{T_2}(X)}, \quad  a_{2i} = \sum_{j=1}^{i} a_{j} a_{2i-j}\quad \text{and} \quad a_{2i+1} = \sum_{j=1}^{i} a_{j} a_{2i+1-j}. \] 
 Then  we  have 
 \begin{eqnarray*}
S \leq  \sum_{m} \frac{1}{C^{m-1}} a_m \sim \sum_{m} \frac{1}{C^{m}} a_m. 
 \end{eqnarray*}
 Then, by induction, we can obtain that there exists $\epsilon_0$ such that if $T_2 \|u_0\|_{X} < \epsilon_0,$ then   $\sum_{i=1}^{\infty} \frac{a_i}{C^i} <\sum_{i=1}^{\infty} \frac{1}{i^{1+ \epsilon}}< \infty$ ($\epsilon$ to be decided). Indeed, if for any $m\leq 2i, a_m< \frac{C^m}{m^{1+\epsilon}},$ then we have
\begin{eqnarray*}
 && a_{2i+1} \\
&  \leq &  a_1 \frac{C^{2i}}{(2i)^{1+\epsilon}} + a_2 \frac{C^{2i-1}}{(2i-1)^{1+ \epsilon}} + \cdots + \frac{C^{2i+1}}{(i(i+1))^{1+\epsilon}}\\
  & \leq  & a_1 \frac{C^{2i+1}}{ C(2i)^{1+\epsilon}} + a_2 \frac{C^{2i+1}}{ C^2(2i-1)^{1+ \epsilon}}+ \sum_{j=3}^{i}  \frac{C^{2i+1}}{(j(2i+1-j))^{1+\epsilon}}\\
 & \leq  & \frac{a_1 (2i+1)^{1+\epsilon}}{ C(2i)^{1+\epsilon}} \frac{C^{2i+1}}{(2i+1)^{1+\epsilon}}+  \frac{a_2 (2i+1)^{1+\epsilon}}{ C^2(2i-1)^{1+ \epsilon}} \frac{C^{2i+1}}{(2i+1)^{1+\epsilon}} + \sum_{j=3}^{i}  \frac{(2i+1)^{1+\epsilon}}{(j(2i+1-j))^{1+\epsilon}} \frac{C^{2i+1}}{(2i+1)^{1+\epsilon}}\\
 & <  & \frac{C^{2i+1}}{ (2i+1)^{1+\epsilon}}.
\end{eqnarray*} 
 Choose $\epsilon>0$ so   $ \frac{a_1 (2i+1)^{1+\epsilon}}{ C(2i)^{1+\epsilon}} <1/3,  \frac{a_2 (2i+1)^{1+\epsilon}}{ C^2(2i-1)^{1+ \epsilon}}<1/3 $ and $ \sum_{j=3}^{i}  \frac{(2i+1)^{1+\epsilon}}{(j(2i+1-j))^{1+\epsilon}} < 1/3$ and hence we may obtain the claim.   Taking these observation into account,  there exits $\epsilon_0$ such that if $T_2 \|u_0\|_{X} < \epsilon_0,$ then  $u(t)= \sum_{i=1}^{\infty} u_i(t) \in L^{\infty} ([0, T_2),  X(\m)).$
\end{proof}
\begin{Lemma}\label{me1} Let $T_1$ and $T_2$ be as in Proposition \ref{lw} and Lemma \ref{ia}.  For $T= \min\{ T_1, T_2 \}, u(t)= \sum_{i=1}^{\infty} u_i(t) \in L^{\infty}([0, T), X(\m))$ satisfies the integral equation
\[u(t) =U_{\beta}(t)u + \int_0^t U_{\beta}(t-s) u^2 ds.\]
\end{Lemma}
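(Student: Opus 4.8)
The plan is to make rigorous the formal computation of Subsection~\ref{fsn} in the case $k=2$, using the absolute convergence of $\sum_i u_i$ furnished by Lemma~\ref{ia}, the uniform boundedness of $U_\beta(t)$ on $M^{p,1}$ (Proposition~\ref{di}), and the Banach algebra property of $M^{p,1}$ (Proposition~\ref{ap}). First I would record the consequences of Lemma~\ref{ia}: for $T_2$ as in that lemma one has $\sum_{i=1}^\infty\|u_i\|_{L^\infty_{T_2}(M^{p,1})}<\infty$, hence a fortiori $\sum_{i=1}^\infty\|u_i\|_{L^\infty_T(M^{p,1})}<\infty$ since $T=\min\{T_1,T_2\}\le T_2$. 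Therefore the partial sums $S_N(t)=\sum_{i=1}^N u_i(t)$ converge, uniformly for $t\in[0,T)$ and in the $M^{p,1}$-norm, to $u(t)$, and $u\in L^\infty([0,T),M^{p,1})$ with $\|u\|_{L^\infty_T(M^{p,1})}\le\sum_i\|u_i\|_{L^\infty_T(M^{p,1})}$. (If continuity is wanted, each $u_i$ is continuous in $t$ by an easy induction — $u_1(t)=U_\beta(t)u_0$, and a time integral of a continuous $M^{p,1}$-valued map is continuous — so $u\in C([0,T],M^{p,1})$ as a uniform limit.)

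The heart of the argument is identifying $u(s)^2$. Since $M^{p,1}$ is a Banach algebra and $\sum_i u_i(s)$ converges absolutely, uniformly in $s$, the Cauchy product formula applies and yields
\[
u(s)^2=\Big(\sum_{i\ge1}u_i(s)\Big)^2=\sum_{m\ge2}\Big(\sum_{\substack{i+j=m\\ i,j\ge1}}u_i(s)u_j(s)\Big),
\]
the outer series converging absolutely and uniformly in $s\in[0,T)$ because, by Proposition~\ref{ap}, $\sum_{m\ge2}\sum_{i+j=m}\|u_i(s)\|_{M^{p,1}}\|u_j(s)\|_{M^{p,1}}\le\big(\sum_{i}\|u_i\|_{L^\infty_T(M^{p,1})}\big)^2<\infty$. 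Next I would apply the Duhamel operator $f\mapsto\int_0^tU_\beta(t-s)f(s)\,ds$, which by Minkowski's inequality for integrals and Proposition~\ref{di} satisfies $\big\|\int_0^tU_\beta(t-s)f(s)\,ds\big\|_{M^{p,1}}\le t\,\|f\|_{L^\infty_T(M^{p,1})}$ and is thus a bounded operator from $L^\infty_T(M^{p,1})$ into $M^{p,1}$; in particular it commutes with the uniformly convergent series above. Combined with the very definition of $u_m$ ($m\ge2$) from Subsection~\ref{fsn}, namely $u_m(t)=\int_0^tU_\beta(t-s)\big(\sum_{i+j=m}u_i(s)u_j(s)\big)\,ds$, this gives
\[
\int_0^tU_\beta(t-s)u(s)^2\,ds=\sum_{m\ge2}\int_0^tU_\beta(t-s)\Big(\sum_{i+j=m}u_i(s)u_j(s)\Big)ds=\sum_{m\ge2}u_m(t).
\]
Adding $u_1(t)=U_\beta(t)u_0$ to both sides produces $u(t)=U_\beta(t)u_0+\int_0^tU_\beta(t-s)u(s)^2\,ds$, the asserted identity. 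For general $k\in\N$ the same scheme works verbatim, with the square replaced by the $k$-fold Cauchy product of the series and the coefficient $\sum_{\Lambda_j}(u_{i_1}\cdots u_{i_k})$ matching the definition of $u_{jk-(j-1)}$ given in the Lemma of Subsection~\ref{fsn}.

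The step I expect to be the main obstacle — in fact the only non-bookkeeping point — is justifying the two interchanges of the infinite summation with (i) forming the product $u(s)^2$ and (ii) applying $\int_0^tU_\beta(t-s)(\cdot)\,ds$. Both rest on having \emph{uniform-in-$s$} absolute convergence of the relevant series, which is exactly what the summability estimate of Lemma~\ref{ia} supplies, upgraded to the Cauchy-product series through the algebra inequality of Proposition~\ref{ap}; once that uniform convergence is in hand, the uniform bound on the Duhamel operator from Proposition~\ref{di} lets the sum pass through the integral, and everything else is the rearrangement already displayed (formally) in Subsection~\ref{fsn}.
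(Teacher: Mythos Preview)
Your proof is correct and follows essentially the same route as the paper's: both rely on the absolute summability from Lemma~\ref{ia}, the algebra property of $M^{p,1}$ (Proposition~\ref{ap}), and the uniform boundedness of $U_\beta$ (Proposition~\ref{di}) to justify interchanging the infinite sum with the product and with the Duhamel integral. The only difference is packaging --- you invoke the Cauchy product theorem in a Banach algebra directly, whereas the paper writes out the two corresponding limit identities (square partial sums converge to $\int_0^t U_\beta(t-s)u^2\,ds$, and square versus triangular partial sums coincide in the limit) by hand.
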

\begin{proof} We claim that 
\begin{equation}\label{de1}
 \lim_{M\to \infty}  \int_0^t U_{\beta}(t-s) \left( \left( \sum_{i=1}^{M}  u_i\right) \left( \sum_{i=1}^{M}   u_i\right)  \right) ds = \int_0^t U_{\beta} (t-s) \left( \left( \sum_{i\in \mathbb N}  u_i\right) \left( \sum_{i\in \mathbb N}   u_i\right) \right) ds
\end{equation}
in  $X_T:= L^{\infty} ([0, T),  X(\m)).$
By the definition of $u_i$ in the formal solution and $\widehat{u_0}\geq 0,$ we have 
\begin{eqnarray*}
 D & := &  \left\|\int_0^t U_{\beta} (t-s) \left( \left( \sum_{i\in \mathbb N}  u_i\right) \left( \sum_{i\in \mathbb N}   u_i\right) \right) ds -  \int_0^t U_{\beta}(t-s) \left(  \left( \sum_{i=1}^{M}  u_i\right) \left( \sum_{i=1}^{M}   u_i\right) \right) ds \right\|_{X_T}\\
 & \leq &   \left\|\int_0^t U_{\beta} (t-s) \left( \left( \sum_{i\in \mathbb N}  u_i\right) \left( \sum_{i\in \mathbb N}   u_i\right) \right) ds -  \int_0^t U_{\beta}(t-s)\left(  \left( \sum_{i=1}^{M}  u_i\right) \left( \sum_{i\in \mathbb N}  u_i\right)  \right) ds   \right\|_{X_T}\\
 && +  \left\| \int_0^t U_{\beta} (t-s) \left(  \left( \sum_{i=1}^{M} u_i \right)  \left(  \sum_{i\in \mathbb N} u_i\right)  \right) ds -\int_0^t U_{\beta} (t-s) \left(  \left( \sum_{i=1}^{M} u_i \right)  \left(  \sum_{i=1}^M u_i\right)    \right) ds \right\|_{X_T}\\
 & \lesssim & \left\| \int_0^t U_{\beta} (t-s) \left(  \left( \sum_{i=M+1}^{\infty} u_i \right)  \left(  \sum_{i\in \mathbb N} u_i\right)   \right) ds \right\|_{X_T}\\
 && + \left\| \int_0^t U_{\beta} (t-s) \left(  \left( \sum_{i=1}^{M} u_i \right)  \left(  \sum_{i=M+1}^{\infty} u_i\right)    \right) ds  \right\|_{X_T}\\
 & \leq & 2 C_0 \left\| \sum_{i=M+1}^{\infty} u_i \right\|_{X_T}.
\end{eqnarray*}
In view of this and Lemma \ref{ia}, we have that  for any small $\epsilon>0,$ there exist $N$ such  that for any $M>N,  D< \epsilon.$ This proves \eqref{de1}.
Next, we show that 
\begin{equation}\label{de2}
 \lim_{M \to \infty} \int_0^t U_{\beta} (t-s)   \left( \sum_{i=1}^{M} u_i \right)   \left( \sum_{i=1}^{M} u_i \right)   ds =  \lim_{M \to \infty} \int_0^t U_{\beta} (t-s)   \left( \sum_{i=2}^{M} \sum_{j=1}^{i-1} u_j u_{(i-j)} \right) ds
\end{equation}
in $X_T.$  Indeed, by Lemma \ref{ia} and  the  similar argument as above, we have 
\begin{eqnarray*}
E & :=  & \left\|  \int_0^t U_{\beta} (t-s)  \left(  \left( \sum_{i=1}^{M} u_i \right)   \left( \sum_{i=1}^{M} u_i \right) \right)   ds-   \int_0^t U_{\beta} (t-s)   \left( \sum_{i=2}^{M} \sum_{j=1}^{i-1} u_j u_{(i-j)} \right)    ds \right\|_{X_T}\\
& \leq & \left\|  \int_0^t U_{\beta} (t-s) \left(  \left( \sum_{i=1}^{M} u_i \right)   \left( \sum_{i=1}^{M} u_i \right) \right)   ds-   \int_0^t U_{\beta} (t-s)   \left( \left( \sum_{i=1}^{[\frac{M}{2}]} u_i \right)   \left( \sum_{i=1}^{[\frac{M}{2}]} u_i \right) \right)   ds\right\|_{X_T}\\
&& + \left\| \int_0^t U_{\beta} (t-s)  \left( \left( \sum_{i=1}^{[\frac{M}{2}]} u_i \right)   \left( \sum_{i=1}^{[\frac{M}{2}]} u_i \right) \right)   ds - \int_0^t U_{\beta} (t-s)   \left( \sum_{i=2}^{M} \sum_{j=1}^{i-1} u_j u_{(i-j)} \right)    ds \right\|_{X_T}\\
& \leq & 2C_0  \left\| \sum_{i= [\frac{M}{2}]+1}^{2M} u_i \right\|_{X_T} + C_0 \sum_{i= [\frac{M}{2}] +1}^{2M} \|u_i\|_{X_T}.
\end{eqnarray*} From this  \eqref{de2} follows.
Taking \eqref{de1}, \eqref{de2} and  definition of $u_i$ in the formal solution (Subsection \ref{fsn}) into account, we obtain
\begin{eqnarray*}
\sum_{i \in \N} u_i & = & U_{\beta} (t) u_0 + \lim_{M\to \infty} \sum_{i=2}^{M} u_i =    U_{\beta} (t) u_0 + \lim_{M\to \infty} \sum_{i=2}^{M}  \int_0^t U_{\beta} (t-s) \left( \sum_{q=1}^{i-1} u_q u_{i-q} \right) ds\\
& = & U_{\beta} (t) u_0 + \lim_{M\to \infty}   \int_0^t U_{\beta} (t-s) \left( \sum_{i=2}^{M}  \sum_{q=1}^{i-1} u_q u_{i-q} \right) ds
\end{eqnarray*}
in $X_T.$ By the above argument, we can obtain that 
\[ u(t) = \sum_{i=1}^{\infty} u_i(t) \in L^{\infty} ([0, T), X(\m)) \]
is a solution of \eqref{fh}. 
 \end{proof}
Let   $T$ be as in Lemma \ref{me1} and  $u(T)$ be the initial data, by repeating  previous process (Proposition \ref{lw} and Lemmas \ref{ia} and \ref{me1}), we can obtain  $t_2$ such that $u(t) = \sum_{j=1} a_j \in L^{\infty} ([T, t_2),  X(\m))$ is a unique solution of the integral equation
\[u(t) =U_{\beta}(t)u + \int_0^t U_{\beta}(t-s) u^2 ds,\]
where, for $T<t< t_2,$
\begin{eqnarray*}
a_1(t) & = & U_{\beta} (t-T) u_T,\\
a_2(t) & = &  \int_T^{t} U_{\beta}(t-s) a_1^2 ds,\\
a_3(t) & = &  \int_T^{t} U_{\beta}(t-s)  2(a_1 a_2) ds,\\
a_4(t) & = &  \int_T^{t} U_{\beta}(t-s)  (2 a_1 a_3 + a_2^2) ds,\\
a_5(t) & = &  \int_T^{t} U_{\beta}(t-s)  (2 a_1 a_4 + 2 a_2 a_3) ds,\\
&& \cdots,\\
a_{2m}(t) & = &  \int_T^{t} U_{\beta}(t-s)  (2 a_1 a_{2m-1} + \cdots + 2a_{m-1} a_{m+1}+ a_m^2) ds,\\
a_{2m+1}(t) & = &  \int_T^{t} U_{\beta}(t-s)  (2 a_1 a_{2m} + \cdots + 2a_{m-1} a_{m+2} + 2 a_{m} a_{m+1}) ds,\\
&&\cdots .
\end{eqnarray*}
Step by step we can obtain a  sequence $\{ t_j \}_{j=0}^{\infty}, t_0=0, t_1=T$ and $t_j \to T^*$ such that for every  $0 \leq  i < \infty$ and   $t_i< t< t_{i+1}, u(t)= \sum_{j=1} a_j(t) \in L^{\infty}([t_i, t_{i+1}),  X(\m))$ is the unique solution of integral equation
\[u(t) =U_{\beta}(t-t_i)u(t_i) + \int_{t_i}^{t_{i+1}} U_{\beta}(t-s) u^2 ds,\]
where, for $t_i<t< t_{i+1},$
\begin{eqnarray*}
a_1(t) & = & U_{\beta} (t-t_i) u(t_i),\\
a_2(t) & = &  \int_{t_i}^{t} U_{\beta}(t-s) a_1^2 ds,\\
a_3(t) & = &  \int_{t_i}^{t} U_{\beta}(t-s)  2(a_1 a_2) ds,\\
a_4(t) & = &  \int_{t_i}^{t} U_{\beta}(t-s)  (2 a_1 a_3 + a_2^2) ds,\\
a_5(t) & = &  \int_{t_i}^{t} U_{\beta}(t-s)  (2 a_1 a_4 + 2 a_2 a_3) ds,\\
&& \cdots,\\
a_{2m}(t) & = &  \int_{t_i}^{t} U_{\beta}(t-s)  (2 a_1 a_{2m-1} + \cdots + 2a_{m-1} a_{m+1} + a_m^2) ds,\\
a_{2m+1}(t) & = &  \int_{t_i}^{t} U_{\beta}(t-s)  (2 a_1 a_{2m} + \cdots +2 a_{m-1} a_{m+2} + 2 a_{m} a_{m+1}) ds,\\
&&\cdots .
\end{eqnarray*}
In this way, we can  obtain $ u(t)= \sum_{j=1} a_j(t) \in L^{\infty}([0,  T^*), X(\m) )$ is the unique solution of integral equation
\begin{eqnarray}\label{fh2}
u(t) =U_{\beta}(t)u + \int_{0}^{t} U_{\beta}(t-s) u^2 ds.
\end{eqnarray}
Moreover, if $T^*< \infty,$ then 
\[ \| u(t)\|_{L^{\infty}([0, T^*),  X)} = \infty.\]
By the similar argument as  above we  can obtain the corresponding  results for $f(u)=u^k.$

\begin{Lemma}\label{cl} Let $u_0\in X(\m)$ and $\widehat{u_0}\geq 0.$ Then
\[\sum_{j=1}^{\infty} \widehat{a_j}(t) \geq \sum_{j=1}^{\infty} \widehat{u_j}(t),\]
where  
\begin{eqnarray*}
u_1(t) & = & U_{\beta}(t) u_0, \quad u_2(t)  =  \int_{0}^t U_{\beta} (t-s) u_1^2 ds,\\
u_3(t) & = & \int_{0}^t U_{\beta} (t-s)  2u_1 u_2 ds = \int_0^{t} U_{\beta} (t-s) \sum_{\Lambda_3} u_{i_1} u_{i_2} ds, \cdots,\\
u_{2n}(t) & = & \int_{0}^t U_{\beta} (t-s) (2 u_1u_{2n-1} + \cdots + 2 u_{n-1} u_{n+1} + u_n^2) ds 
=   \int_0^{t} U_{\beta} (t-s) \sum_{\Lambda_{2n}} u_{i_1} u_{i_2} ds,
\end{eqnarray*}
\[ u_{2n+1}(t)  =  \int_{0}^t U_{\beta} (t-s) (2 u_1u_{2n} + \cdots + 2 u_{n-1} u_{n} + 2 u_n u_{n+1}) ds 
 =   \int_0^{t} U_{\beta} (t-s) \sum_{\Lambda_{2n+1}} u_{i_1} u_{i_2} ds,\]
..., and $\Lambda_j = \{ (i_1, i_2) : i_1 + i_2 =j,  i_m \in \N, m=1,2 \}.$ Actually, by the fact that  for $0\leq  t < T^*, \sum_{j=1} \widehat{ a_j} \in L^1(\widehat{\m}),$ we have $\sum_{j=1} \widehat{a_j} = \mathcal{F} \left( \sum_{j=1} a_j(t) \right).$ So, $\sum_{j=1} \widehat{u_j(t)}$ and $\sum_{j=1} \widehat{a_j(t)}$ are non negative  term series.  Moreover, by  $\{ \widehat{u_j(t)}\}_{j=1}^{\infty}$ is the rearrangement of  $\{ \widehat{a_j(t)}\}_{j=1}^{\infty}$.
\end{Lemma}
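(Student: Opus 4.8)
The plan is to compare the two families of Fourier transforms \emph{termwise}, exploiting that every building block in both iteration schemes has a nonnegative Fourier transform, so that the comparison ultimately reduces to the rearrangement invariance of sums of nonnegative terms (which is precisely the hint recorded at the end of the statement).

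First I would establish the positivity statement: for every $t\geq 0$ and every $j$, both $\widehat{u_j}(t)\geq 0$ and $\widehat{a_j}(t)\geq 0$. This is an induction on $j$ (and on the step index $i$ for the $a_j$'s). The base case is $\widehat{u_1}(t)=e^{-t|\xi|^{\beta}}\widehat{u_0}\geq 0$, and for the $a$-scheme $\widehat{a_1}(t)=e^{-(t-t_i)|\xi|^{\beta}}\,\widehat{u(t_i)}$, where $\widehat{u(t_i)}=\sum_j\widehat{a_j}(t_i)\geq 0$ by the inductive hypothesis on the previous interval. The inductive step uses that $\mathcal F(U_{\beta}(t-s)g)=e^{-(t-s)|\xi|^{\beta}}\widehat g$ with $e^{-(t-s)|\xi|^{\beta}}\geq 0$, that $\mathcal F(gh)=(2\pi)^{-d/2}\,\widehat g*\widehat h$ preserves nonnegativity, and that $\int_{t_i}^{t}(\cdots)\,ds$ of a nonnegative integrand is nonnegative. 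Consequently both $\sum_j\widehat{u_j}(t)$ and $\sum_j\widehat{a_j}(t)$ are series of nonnegative functions, and the latter converges to $\widehat{u(t)}\in L^{1}(\R^d)$ for $t<T^{*}$.

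Next I would unwind both schemes into their underlying Duhamel-tree expansions. Iterating the defining relations from Subsection~\ref{fsn}, $\sum_j u_j(t)$ is the sum, over all finite binary trees, of the iterated-integral ``monomials'' built from $U_{\beta}$, pointwise products and time integrals starting at $0$; grouping these by the number of leaves recovers the $u_j$. For the piecewise construction one does the same on each $[t_i,t_{i+1}]$ with leaves $u(t_i)$ and integrals starting at $t_i$, and then substitutes recursively $u(t_i)=\sum_m a_m(t_i)$. Using the semigroup identity $U_{\beta}(t-s)U_{\beta}(s-\tau)=U_{\beta}(t-\tau)$ to splice the frozen pieces into the running integrals, each such contribution collapses into a genuine from-$0$ Duhamel-tree monomial. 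On $[0,t_1]$ one has literally $a_j=u_j$, so the base case is equality; in general this identification exhibits $\{\widehat{u_j}(t)\}_j$ as a rearrangement of a sub-multiset of $\{\widehat{a_j}(t)\}_j$ at the level of tree-monomials: every monomial occurring in $\sum_j u_j(t)$ occurs among those of $\sum_j a_j(t)$. Since each tree-monomial has a nonnegative Fourier transform, and sums of such are insensitive to rearrangement and to the omission of further nonnegative terms, we conclude $\sum_j\widehat{a_j}(t)\geq\sum_j\widehat{u_j}(t)$ for all $t\in[0,T^{*})$ (with equality when the two multisets coincide). The case $f(u)=u^{k}$ is identical with binary trees replaced by $k$-ary trees and the index sets $\Lambda_j$ as defined above.

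The main obstacle is the bookkeeping in the splicing step: tracking which tree-monomials are produced when the solution value $u(t_i)$ --- itself the full Duhamel series over $[0,t_i]$ --- is inserted as a leaf at the start of the next interval, and verifying that the semigroup property reassembles the broken time integrals $\int_{t_i}^{t}\!\!\int_{0}^{t_i}(\cdots)$ exactly into the from-$0$ integrals $\int_{0}^{t}(\cdots)$ without dropping or double-counting terms. This is where care is needed to obtain a clean rearrangement (or at least the one-sided containment that the lemma actually requires) rather than a spurious combinatorial mismatch; once that is in hand, the remaining positivity argument is routine.
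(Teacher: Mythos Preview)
Your approach is essentially the same as the paper's: both arguments establish that every $\widehat{u_j}$ and $\widehat{a_j}$ is nonnegative, then split the time integrals $\int_0^t=\int_0^{t_i}+\int_{t_i}^t$, use the semigroup identity for $U_\beta$ to splice the pieces, and conclude that each expansion term of $\widehat{u_j}$ already appears among the expansion terms of the $\widehat{a_j}$'s, whence the inequality follows by positivity. The only difference is packaging---you phrase the expansion abstractly via binary-tree monomials, whereas the paper writes out the first few $\widehat{a_j}$ and $\widehat{u_j}$ explicitly and invokes induction---but the underlying combinatorial comparison and the identification of the bookkeeping in the splicing step as the crux are identical.
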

\begin{proof}
  We just prove that $\sum_{j=1}^{\infty} \widehat{u_j}\leq  \sum_{j=1}^{\infty} \widehat{a_j}.$ For $t_1=T<t< t_2,$ by  $\sum_{j=1}^{\infty} a_j(t) \in L^{\infty}([0, T), X(\m))$ satisfies the integral heat equation \eqref{fh2}, we have 
  
\begin{eqnarray*}
\widehat{a_1}(t) & = & e^{-(t-T) |\xi|^{\beta}} \widehat{u}(T) =  e^{-(t-T) |\xi|^{\beta}} \mathcal{F} \left(U_{\beta}(T) u_0 +  \int _0^T U_{\beta}(T-s) u^2 (s) ds \right)\\
& = &   e^{-t|\xi|^{\beta}} \widehat{u_0} +  \int _0^T  e^{-(t-s)|\xi|^{\beta}} \left(  \sum_{j=1}^{\infty}  \widehat{a_j}(s)\right) \ast \left(  \sum_{j=1}^{\infty}  \widehat{a_j}(s)\right)  ds\\
& = &  e^{-t|\xi|^{\beta}} \widehat{u_0} +   \sum_{j=1}^{\infty} \int _0^T  e^{-(t-s)|\xi|^{\beta}}  \sum_{\Lambda_j} \widehat{ a_{i_1}} \ast \widehat{ a_{i_2}}  ds,
\end{eqnarray*}  
  
  \begin{eqnarray*}
  \widehat{a_2}(t)  =  \int_{T}^t e^{-(t-s)|\xi|^{\beta}} (\widehat{a_1} \ast  \widehat{a_1}) ds, \cdots,  \widehat{a_i}(t)  =  \int_{T}^t e^{-(t-s)|\xi|^{\beta}}  \sum_{\Lambda_i}\widehat{a_{i_1}} \ast  \widehat{a_{i_2}} ds, \cdots .
  \end{eqnarray*}
Moreover, by $\widehat{ a_i}(t) = \widehat{u_i}(t)$ for $0 \leq t \le T,$ we  have 
\begin{eqnarray*}
\widehat{u_1}(t) & = &  e^{-t|\xi|^{\beta} }\widehat{u_0},
\end{eqnarray*}  
\begin{eqnarray*}
\widehat{u_2}(t) & = &  \int_0^t e^{- (t-s) |\xi|^{\beta}} (\widehat{ u_1} \ast \widehat{u_1}) ds
 =  \int_0^T e^{- (t-s) |\xi|^{\beta}} (\widehat{ u_1} \ast \widehat{u_1}) ds + \int_T^t e^{- (t-s) |\xi|^{\beta}} (\widehat{ u_1} \ast \widehat{u_1}) ds\\
& = & \int_0^T e^{- (t-s) |\xi|^{\beta}} (\widehat{ a_1} \ast \widehat{a_1}) ds + \int_T^t e^{- (t-s) |\xi|^{\beta}} (\widehat{ u_1} \ast \widehat{u_1}) ds,
\end{eqnarray*} 
 \begin{eqnarray*}
\widehat{u_3}(t) & = &  \int_0^t e^{- (t-s) |\xi|^{\beta}} 2 (\widehat{ u_1} \ast \widehat{u_2}) ds
=  \int_0^T e^{- (t-s) |\xi|^{\beta}}2 (\widehat{ u_1} \ast \widehat{u_2}) ds + \int_T^t e^{- (t-s) |\xi|^{\beta}}2 (\widehat{ u_1} \ast \widehat{u_2}) ds\\
& = & \int_0^T e^{- (t-s) |\xi|^{\beta}} 2(\widehat{ a_1} \ast \widehat{a_2}) ds + \int_T^t e^{- (t-s) |\xi|^{\beta}} 2(\widehat{ u_1} \ast  \int_0^T e^{-(s-s_1)|\xi|^{\beta}} (\widehat{a_1}\ast \widehat{a_1}) ds_1) ds\\
&&  + \int_T^t e^{- (t-s) |\xi|^{\beta}} 2(\widehat{ u_1} \ast  \int_T^t e^{-(s-s_1)|\xi|^{\beta}} (\widehat{u_1}\ast \widehat{u_1}) ds_1) ds,\\
&& \cdots.
\end{eqnarray*}   
Note that 
$\int_0^T e^{- (t-s) |\xi|^{\beta}} 2 (\widehat{ a_1} \ast \widehat{a_1}) ds$ and $\int_0^T e^{- (t-s) |\xi|^{\beta}} 2 (\widehat{ a_1} \ast \widehat{a_2}) ds  $  are expansion  term of $\widehat{a_1}$, 
$\int_T^t e^{- (t-s) |\xi|^{\beta}} (\widehat{ u_1} \ast \widehat{u_1}) ds $ and $\int_T^t e^{- (t-s) |\xi|^{\beta}} 2(\widehat{ u_1} \ast  \int_0^T e^{-(s-s_1)|\xi|^{\beta}} (\widehat{u_1}\ast \widehat{u_1}) ds_1) ds$   are the expansion term of  $ \widehat{a_2},$...
Then by induction
\begin{eqnarray*}
u_{2n}(t)  =  \int_0^t U_{\beta} (t-s) (2 u_1 u_{2n-1} + ...+ 2 u_{n-1}u_{n+1} + u_n^2) ds =  \int_0^t U_{\beta} (t-s) \sum_{\Lambda_{2n}} u_{i_1} u_{i_2} ds,\\
u_{2n+1}(t)  =  \int_0^t U_{\beta} (t-s) (2 u_1 u_{2n} + ...+ 2 u_{n-1}u_{n} + u_n u_{n+1}) ds
 =  \int_0^t U_{\beta} (t-s) \sum_{\Lambda_{2n+1}} u_{i_1} u_{i_2} ds,
\end{eqnarray*}
..., and comparing  the expansion  term of $\widehat{a_i}$ and $\widehat{ u_i}$ (by splitting  the integral  $\int_0^t$ into  $\int_0^T$ and $\int_{T}^t$), it is easy to see that every expansion term of $\widehat{u_i}$ is a expansion  term of $\widehat{a_i}.$ (Here, the every expansion term is  non negative.) Then, by induction, we have $\sum_{j=1}  \widehat{ u_j} \leq \sum_{j=1}^{\infty} \widehat{a_j} \in L^1(\widehat{\m})$ for $0<t< T^*.$
We have  $\mathcal{F}\left(\sum_{j=1}^{\infty} u_j \right) = \sum_{j=1}^{\infty} \widehat{ u_j}$ and 
\begin{eqnarray*}
\left\|  \sum_{j=1}^{\infty} u_j \right\|_{L^{\infty}_{loc} ([0, T^*), X)} \leq \left\|  \sum_{j=1}^{\infty} a_j \right\|_{L^{\infty}_{loc} ([0, T^*),  X)}
\end{eqnarray*}
Moreover, by the dominated convergence theorem, we have 
\begin{eqnarray*}
\left\|  \sum_{j=1}^{\infty} u_j \right\|_{L^{\infty}_{loc} ([0, T^*), X)} \leq \left\|  \sum_{j=1}^{\infty} \|u_j\|_{X} \right\|_{L^{\infty}_{loc} [0, T^*)} .
\end{eqnarray*}
\end{proof}
The following lemma will play crucial role to prove Theorem \ref{dip}.
\begin{Lemma}\label{fl} There exists  $0<T< \infty$ such that
\[  \left\| \sum_{j=1}^{\infty} u_j \right\|_{L^{\infty}([0, T),  X)} = \infty.\] 
\end{Lemma}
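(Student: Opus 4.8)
\emph{Proof strategy for Lemma \ref{fl}.} The plan is to show that the Fourier side of the formal solution, $\sum_{j}\widehat{u_j}(t,\cdot)$ — a series with \emph{nonnegative} terms since $\widehat{u_0}\ge 0$ — becomes unboundedly large, uniformly on the whole ball $B_0(r)$, as $t$ runs up to a finite time $T$, and then to transfer this to the $M^{p,1}$ norm. For the transfer I would first record the elementary bound
\[
\|f\|_{M^{p,1}}\;\gtrsim\;\|\widehat f\|_{L^1(\R^d)}\qquad(\text{valid for all }1\le p\le\infty\text{ whenever }\widehat f\ge 0).
\]
Indeed, by \eqref{ed} one has $\|f\|_{M^{p,1}}\asymp\sum_{k}\|\square_k f\|_{L^p}$; since $\sigma_k\widehat f$ is supported in a cube of bounded size, a Bernstein-type inequality gives $\|\square_k f\|_{L^p}\gtrsim\|\square_k f\|_{L^\infty}\ge|\square_k f(0)|=(2\pi)^{-d/2}\int \sigma_k\widehat f$ (using $\widehat f\ge 0$ and $\sigma_k\ge 0$), with constant uniform in $k$; summing in $k$ with $\sum_k\sigma_k\equiv 1$ proves the claim. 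Applying this with $f=\sum_{j\le N}u_j(t)$ and letting $N\to\infty$ by monotone convergence, the lemma reduces to producing a finite $T>0$ for which $\int_{B_0(r)}\sum_{j}\widehat{u_j}(t,\xi)\,d\xi\to\infty$ as $t\uparrow T$.

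\emph{The comparison.} Put $v_0=0$ and $v_{n+1}(t)=U_\beta(t)u_0+\int_0^t U_\beta(t-s)\,v_n(s)^{k}\,ds$. Unwinding the recursion, each $v_n$ is exactly the truncation of the ``tree'' expansion underlying $\sum_j u_j$ (the same combinatorics as the index sets $\Lambda_j$ of Subsection \ref{fsn}) to depth $\le n$; hence $\widehat{v_n}\ge 0$ everywhere, $\widehat{v_n}\le\widehat{v_{n+1}}$, and $\widehat{v_n}(t,\xi)\uparrow\sum_{j}\widehat{u_j}(t,\xi)\in[0,\infty]$ for every $(t,\xi)$. I would then use the elementary geometric fact that $B_0(r)\cap(\xi+B_0(r))\supseteq B_{\xi/2}(r/2)$ for $|\xi|\le r$, which gives $\chi_{B_0(r)}*\chi_{B_0(r)}\ge (r^dv_d/2^d)\,\chi_{B_0(r)}$ on $B_0(r)$, and by iteration $\chi_{B_0(r)}^{*k}\ge(r^dv_d/2^d)^{k-1}\chi_{B_0(r)}\ge\chi_{B_0(r)}$ on $B_0(r)$ because $r^dv_d\ge 2^d$. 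Since $\widehat{u_0}\ge\gamma\chi_{B_0(r)}$ and $e^{-t|\xi|^\beta}\ge e^{-tr^\beta}$ for $\xi\in B_0(r)$, an induction on $n$ (using monotonicity of convolution on nonnegative functions) yields $\widehat{v_n}(t,\xi)\ge Y_n(t)$ for all $\xi\in B_0(r)$, where $Y_0\equiv0$ and
\[
Y_{n+1}(t)=\gamma e^{-tr^\beta}+c\int_0^t e^{-(t-s)r^\beta}Y_n(s)^k\,ds,\qquad c:=(2\pi)^{-d(k-1)/2}\Bigl(\tfrac{r^dv_d}{2^d}\Bigr)^{k-1}>0 .
\]
The $Y_n$ increase to the maximal solution $\Gamma$ of the Bernoulli equation $\Gamma'=c\,\Gamma^{k}-r^\beta\Gamma$, $\Gamma(0)=\gamma$.

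\emph{Finite-time blow-up of $\Gamma$ and conclusion.} Substituting $\Gamma=e^{-r^\beta t}y$ turns the ODE into $y'=c\,e^{-(k-1)r^\beta t}y^{k}$, which integrates to $\Gamma(t)^{1-k}=\gamma^{1-k}-\frac{c}{r^\beta}\bigl(1-e^{-(k-1)r^\beta t}\bigr)$; hence $\Gamma$ blows up at the finite time $T:=-\bigl((k-1)r^\beta\bigr)^{-1}\ln\!\bigl(1-\tfrac{r^\beta}{c\,\gamma^{k-1}}\bigr)\in(0,\infty)$ precisely when $c\,\gamma^{k-1}>r^\beta$ — and this is where the hypotheses $r^dv_d\ge 2^d$ and $\gamma^{k-1}\ge 4r^\beta(k-1)e$ enter, the latter supplying the slack that dominates the constant $c$. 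For $t<T$ one combines the two steps:
\[
\sum_{j=1}^{\infty}\|u_j(t)\|_{M^{p,1}}\ \ge\ \sup_{N}\Bigl\|\sum_{j\le N}u_j(t)\Bigr\|_{M^{p,1}}\ \gtrsim\ \int_{B_0(r)}\sum_{j}\widehat{u_j}(t,\xi)\,d\xi\ \ge\ \int_{B_0(r)}\Gamma(t)\,d\xi\ =\ r^d v_d\,\Gamma(t),
\]
and since $\Gamma(t)\to\infty$ as $t\uparrow T$ this shows $\bigl\|\sum_j u_j\bigr\|_{L^\infty([0,T),M^{p,1})}=\infty$ with $0<T<\infty$, which is the assertion.

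\emph{Main obstacle.} Conceptually nothing is hard — positivity, monotone convergence, the explicit ODE, and the $\mathcal{F}L^1$-type lower bound are routine. The two delicate points are bookkeeping: (i) verifying that the Picard iterates $v_n$ are genuinely the depth-$\le n$ truncations of the tree expansion so that $\widehat{v_n}\uparrow\sum_j\widehat{u_j}$ (alternatively one may work directly with the true solution $\sum_j a_j$ via Lemma \ref{cl}, since $\{\widehat{a_j}\}$ is a rearrangement of $\{\widehat{u_j}\}$, or, in the spirit of Lemma \ref{ia}, track an explicit scalar recursion lower-bounding $\inf_{B_0(r)}\widehat{u_j}$ and show it has finite radius of convergence in $t$); and (ii) checking that the multiplicative constants — the factor $(2\pi)^{-d(k-1)/2}$ from $\mathcal{F}(w^{k})=(2\pi)^{-d(k-1)/2}(\widehat{w})^{*k}$, the volume factors, and the Bernstein constant — are absorbed by the gap in $\gamma^{k-1}\ge 4r^\beta(k-1)e$, so that indeed $c\,\gamma^{k-1}>r^\beta$.
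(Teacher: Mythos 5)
Your proposal is correct in outline and takes a genuinely different route from the paper. The paper proves Lemma \ref{fl} by a term-by-term induction on the Fourier side: splitting the convolution integrals over the regions $E_{\xi,y}$ and $F_{\xi,y}$, it establishes the explicit lower bounds $\widehat{u_i}(t,\xi)\gtrsim \gamma^{i}e^{-4r^{\beta}(i-1)t}t^{i-1}e^{-t|\xi|^{\beta}}\chi_{B_0(r)}(\xi)$ (and the analogue for general $k$), then transfers to the norm via the embedding $M^{p,1}\hookrightarrow\mathcal{F}L^1$ of Lemma \ref{rl}\eqref{rcs} (this is exactly where $1\le p\le 2$ enters), and finally notes that the resulting series $\sum_i \gamma^i t^{i-1}e^{-4r^{\beta}(i-1)t}$ diverges once $t\ge \tfrac{1}{4r^{\beta}(k-1)}$, which is what the hypothesis $\gamma^{k-1}\ge 4r^{\beta}(k-1)e$ is tuned to. You instead bound the whole sum at once: monotone Picard iterates identified with the depth-truncated tree expansion, the same convolution inequality $\chi_{B_0(r)}\ast\chi_{B_0(r)}\ge\chi_{B_0(r)}$ coming from $r^dv_d\ge 2^d$, and comparison with the Bernoulli ODE $\Gamma'=c\,\Gamma^k-r^{\beta}\Gamma$, which blows up at an explicit finite time when $c\,\gamma^{k-1}>r^{\beta}$; moreover your transfer step $\|f\|_{M^{p,1}}\gtrsim\|\widehat f\|_{L^1}$ for $\widehat f\ge 0$, proved by Bernstein on the pieces $\square_k f$, is valid for all $1\le p\le\infty$ and is thus more robust than the embedding the paper invokes. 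What each buys: the paper's induction is elementary and self-contained but rigidly tied to the specific time $T\ge\tfrac{1}{4r^{\beta}(k-1)}$ and to the algebraic form of the hypothesis; your ODE comparison is structurally cleaner, isolates the blow-up criterion from the combinatorics, and yields an explicit blow-up time. The caveats you flag are real but put you on equal footing with the paper: your constant $c=(2\pi)^{-d(k-1)/2}(r^dv_d/2^d)^{k-1}$ carries the normalization factor of the convolution theorem, so $\gamma^{k-1}\ge 4r^{\beta}(k-1)e$ alone does not dominate $(2\pi)^{d(k-1)/2}$ in high dimension — but the paper silently drops the very same $(2\pi)^{-d/2}$ when writing $\mathcal{F}(u_1^2)=\widehat{u_1}\ast\widehat{u_1}$, so this is a shared (not new) gap. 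Two small points to tidy: your displayed formula for $\Gamma(t)^{1-k}$ is actually the formula for $y(t)^{1-k}$ with $\Gamma=e^{-r^{\beta}t}y$ (the blow-up time is unaffected), and the claim that the monotone iterates $Y_n$ increase to the maximal ODE solution deserves a line of justification (monotone convergence in the integral equation plus Gronwall uniqueness on compact subintervals of $[0,T)$), though this is routine.
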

\begin{proof}
Let 
$$ E_{x,y} = \left \{ x, y \in \mathbb R^d: |x|^{\beta} \leq |x-y|^{\beta} + |y|^{\beta} \right\},$$
$$F_{x,y} = \left \{ x, y \in \mathbb R^d: |x|^{\beta} \geq |x-y|^{\beta} + |y|^{\beta} \right\} .$$ By $\widehat{u_0}\geq 0, \widehat{u_0} \geq \gamma \chi_{B_0(r)}$ and   $r^d v_d \geq 2^d$ (which implies  that   $\chi_{B_0(r)}\ast \chi_{B_0(r)}(\xi)  \geq \chi_{B_0(r)}(\xi)$), we have
\begin{eqnarray}\label{1}
\widehat{u_1}(t, \xi) & =  &e^{-t|\xi|^{\beta}} \widehat{u_0}(\xi) \geq e^{-t|\xi|^{\beta}} \gamma \chi_{B_0(r)}(\xi)= a_{1}(t, \xi),
\end{eqnarray}
\begin{eqnarray*}
\widehat{u_2}(t, \xi) &  = &  \int_0^t e^{-(t-s)|\xi|^{\beta}}\mathcal{F}(u_1^2)(s, \xi) ds
  =   \int_0^t e^{-(t-s)|\xi|^{\beta}}  \left((e^{-s|\cdot|^{\beta} }\widehat{u_0})\ast (e^{-s|\cdot|^{\beta} }\widehat{u_0})\right)(\xi) ds\\
& = &  \int_0^t e^{-(t-s)|\xi|^{\beta}}  \int_{\R^d}  e^{-s |\xi-y|^{\beta}} e^{-s|y|^{\beta}} \widehat{u_0}(\xi-y) \widehat{u_0}(y) dy ds\\
& \geq &  \gamma^2 \int_0^t e^{-(t-s)|\xi|^{\beta}}  \int_{\R^d}  e^{-s |\xi-y|^2} e^{-s|y|^{\beta}} \chi_{B_0(r)}(\xi-y) \chi_{B_0(r)}(y) dy ds\\
& = &   \gamma^2 \int_0^t e^{-(t-s)|\xi|^{\beta}}  \int_{\R^d}  e^{-s |\xi-y|^{\beta}} e^{-s|y|^{\beta}}\\
&&  \left( \chi_{B_0(r)}(\xi-y) \chi_{B_0(r)}(y)\mid_{E_{\xi,y}} + \chi_{B_0(r)}(\xi-y) \chi_{B_0(r)}(y)\mid_{F_{\xi,y}}  \right) dy ds.\\
\end{eqnarray*}
Notice that for  $\xi, y \in E_{x,y}$ and $0<s<t,$ we have  $e^{-(t-s) |\xi|^{\beta}} e^{-s|\xi-y|^{\beta}} e^{-s|y|^{\beta}} \gtrsim e^{-t|\xi-y|^{\beta}} e^{-t|y|^{\beta}}.$  Similarly, for $\xi, y \in F_{x,y},$ we have   $e^{-(t-s) |\xi|^{\beta}} e^{-s|\xi-y|^{\beta}} e^{-s|y|^{\beta}} \gtrsim e ^{-t|\xi|^{\beta}}.$
\begin{eqnarray}\label{2}
 \widehat{u_2}(t, \xi) 
& \gtrsim &  \gamma^2 \int_0^t  \int_{\R^d} e^{-t|\xi-y|^{\beta}} e^{-t|y|^{\beta}}  \chi_{B_0(r)}(\xi-y) \chi_{B_0(r)}(y)\mid_{E_{\xi,y}}  dy ds \nonumber \\
&& + \gamma^2 e^{-t |\xi|^{\beta}} \int_0^t  \int_{\mathbb R^d}  \chi_{B_0(r)}(\xi-y) \chi_{B_0(r)}(y)\mid_{F_{\xi,y}}  dy ds \nonumber \\
& = &  \gamma^2 t  \int_{\R^d} e^{-t|\xi-y|^{\beta}} e^{-t|y|^{\beta}}  \chi_{B_0(r)}(\xi-y) \chi_{B_0(r)}(y)\mid_{E_{\xi,y}}  dy \nonumber \\
&& + \gamma^2 t e^{-t |\xi|^{\beta}}  \int_{\mathbb R^d}  \chi_{B_0(r)}(\xi-y) \chi_{B_0(r)}(y)\mid_{F_{\xi,y}}  dy \nonumber \\
& \gtrsim &  \gamma^2 t e^{-2r^{\beta} t}  e^{-t |\xi|^{\beta}} \int_{\R^d \cap  E_{\xi, y }}   \chi_{B_0(r)}(\xi-y) \chi_{B_0(r)}(y) dy \nonumber \\
&& + \gamma^2 t e^{-t |\xi|^{\beta}}  \int_{\mathbb R^d \cap F_{\xi, y}}  \chi_{B_0(r)}(\xi-y) \chi_{B_0(r)}(y)  dy  \nonumber \\
& \gtrsim &  \gamma^2 t e^{-2r^{\beta} t}  e^{-t |\xi|^{\beta}} [\chi_{B_0(r)}\ast \chi_{B_0(r)}] (\xi) \\
& \gtrsim &  \gamma^2 t e^{-2r^{\beta} t}  e^{-t |\xi|^{\beta}}  \chi_{B_0(r)}(\xi)  =  a_2(t,\xi),\nonumber
\end{eqnarray}
By  \eqref{1} and \eqref{2}, we have 
\begin{eqnarray*}
\widehat{u_3}(t, \xi) &  = & 2 \int_0^t e^{-(t-s)|\xi|^{\beta}}  \left(\widehat{u_1} \ast \widehat{u_2}\right)(\xi) ds
 \geq  2 \gamma \int_0^t e^{-(t-s)|\xi|^{\beta}}  \left(  (e^{-s|\xi|^{\beta} } \chi_{B_0(r)})\ast (\widehat{u_2}) (s, \xi)\right) ds\\
& =  &  2 \gamma \int_0^t e^{-(t-s)|\xi|^{\beta}}  \int_{\R^d}   e^{-s|\xi -\xi_1|^{\beta} } \chi_{B_0(r)} (\xi -\xi_1)\widehat{u_2} (s, \xi_1) d\xi_1  ds\\
& \gtrsim  &   \gamma^3 e^{-2r^{\beta}t} \int_0^t e^{-(t-s)|\xi|^{\beta}} s  \int_{\R^d}   e^{-s|\xi -\xi_1|^{\beta} } \chi_{B_0(r)} (\xi -\xi_1) e^{-s|\xi_1|^{\beta}} [\chi_{B_0(r)}\ast \chi_{B_0(r)}] (\xi_1)d\xi_1  ds\\
\end{eqnarray*}
Dividing the integral into two parts and arguing as before, we obtain
\begin{eqnarray*}
\widehat{u_3}(t, \xi) 
& \gtrsim &  \gamma^3 e^{-4r^{\beta}t} \int_0^t  s  \{ e^{-t|\xi|^{\beta}}  \int_{\R^d \cap F_{\xi, \xi_1}}    \chi_{B_0(r)} (\xi -\xi_1)  \chi_{B_0(r)}(\xi_1)d\xi_1  \\
&& + \int_{\R^d \cap E_{\xi, \xi_1}}   e^{-t |\xi -\xi_1|^{\beta} } \chi_{B_0(r)} (\xi -\xi_1)  e^{-t|\xi_1|^{\beta}} \chi_{B_0(r)}(\xi_1) d\xi_1\} ds\\
& = &  \gamma^3 e^{-4r^{\beta}t} t^2 \{ e^{-t|\xi|^{\beta}}  \int_{\R^d \cap F_{\xi, \xi_1}}  \chi_{B_0(r)} (\xi -\xi_1)    \chi_{B_0(r)}(\xi_1) d\xi_1 \\
&& + \int_{\R^d \cap E_{\xi, \xi_1}}   e^{-t |\xi -\xi_1|^{\beta} } \chi_{B_0(r)} (\xi -\xi_1)   e^{-t|\xi_1|^{\beta}} \chi_{B_0(r)}(\xi_1) d\xi_1\}   
\end{eqnarray*}
\begin{eqnarray*}
\widehat{u_3}(t, \xi) 
& \gtrsim  & \gamma^3 e^{-4r^{\beta}t} t^2 e^{-t|\xi|^{\beta}}  \int_{\R^d \cap F_{\xi, \xi_1}}   \chi_{B_0(r)} (\xi -\xi_1) \chi_{B_0(r)}(\xi_1) d\xi_1  \\
&& +  \gamma^3 e^{-8r^{\beta}t} t^2 e^{-t|\xi|^{\beta}}   \int_{\R^d \cap E_{\xi, \xi_1}}    \chi_{B_0(r)} (\xi -\xi_1)  \chi_{B_0(r)}(\xi_1) d\xi_1 \\
& \gtrsim & \gamma^3 e^{-8r^{\beta}t} t^2 e^{-t|\xi|^{\beta}}  [\chi_{B_0(r)} \ast \chi_{B_0(r)}](\xi) \\
& \gtrsim &  \gamma^3 e^{-8r^{\beta}t} t^2 e^{-t|\xi|^{\beta}} \chi_{B_0(r)} (\xi)\\
& = & a_3(t, \xi),\\
&& \cdots ,
\end{eqnarray*}
\begin{eqnarray*}
\widehat{u_{2n}}(t, \xi)  
& = &  \int_0^t  e^{-(t-s)|\xi|^{\beta}}\mathcal{F}(2u_1 u_{2n-1} + \cdots  + 2 u_{n-1}u_{n+1} + u_n^2)(\xi) ds\\
& \geq & \int_0^t e^{-(t-s)|\xi|^{\beta}} (2 a_1 (s, \xi) \ast a_{2n-1}(s, \xi)+\cdots \\
&& + 2 a_{n-1} (s, \xi) \ast a_{n+1}(s, \xi) + a_n(s, \xi) \ast a_n(s, \xi)) ds\\
&= & a_{2n} (t, \xi),
\end{eqnarray*}
\begin{eqnarray*}
\widehat{u_{2n+1}}(t, \xi) 
& = &  \int_0^t  e^{-(t-s)|\xi|^{\beta}}\mathcal{F}(2u_1 u_{2n} + \cdots +  2 u_{n-1}u_{n+2} +2 u_n u_{n+1})(\xi) ds\\
& \geq & \int_0^t e^{-(t-s)|\xi|^{\beta}} (2 a_1 (s, \xi) \ast a_{2n}(s, \xi)+\cdots \\
&& + 2 a_{n-1} (s, \xi) + a_{n+2}(s, \xi) \ast 2 a_n(s, \xi) \ast a_{n+1}(s, \xi)) ds\\
&= & a_{2n+1} (t, \xi),\\
&& \cdots .
\end{eqnarray*}
Firstly, we have  $a_1(t, \xi) = \gamma e^{-t |\xi|^{\beta}} \chi_{B_0(r)} (\xi).$ Secondly, if 
\begin{eqnarray*}
 a_i(t, \xi) \geq  \gamma^{i} e^{-4r^{\beta} (i-1)t} t^{i-1} e^{-t|\xi|^{\beta}} \chi_{B_0(r)} (\xi)
\end{eqnarray*}
for $i<2n,$ then we have 
\begin{eqnarray*}
a_{2n} (t, \xi) & = &  \int_0^t e^{-(t-s)|\xi|^{\beta}} (2  \int_{\R^d}a_1 (s, \xi -y)  a_{2n-1}(s, y) dy +\cdots \\
&& + 2 \int_{\R^d}a_{n-1} (s, \xi-y)  a_{n+1}(s, y) dy + \int_{\R^d} a_n(s, \xi-y) a_n(s, y) dy) ds\\
& \geq & (2n-1) \gamma^{2n} \int_0^t e^{-(t-s) |\xi|^{\beta}}\\
&& \int_{\R^d} s^{2n-2} e^{-4r^{\beta} (2n-2) s} e^{-s |\xi-y|^{\beta}} e^{-s|y|^{\beta}} \chi_{B_0(r)}(\xi-y) \chi_{B_0(r)} (y) dy ds\\
& \geq &  (2n-1) \gamma^{2n} e^{-4r^{\beta} (2n-2) t} \int_0^t s^{2n-2} e^{-(t-s) |\xi|^{\beta}}  \\
&& \int_{\R^d}  e^{-s |\xi-y|^{\beta}} e^{-s|y|^{\beta}} \chi_{B_0(r)}(\xi-y) \chi_{B_0(r)} (y) dy ds  
\end{eqnarray*}
Dividing integral into two parts as before, we have 
\begin{eqnarray*}
a_{2n} (t, \xi) & \geq & (2n-1) \gamma^{2n} e^{-4r^{\beta} (2n-2) t} \int_0^t  s^{2n-2} \\
&& (\int_{\R^d \cap E_{\xi, y}}  e^{-(t-s) |\xi|^{\beta}}  e^{-s|y|^{\beta}} \chi_{B_0(r)}(\xi-y) \chi_{B_0(r)} (y) dy \\
&& + e^{-t|\xi|^{\beta}}\int_{\R^d \cap F_{\xi, y}}   \chi_{B_0(r)}(\xi-y) \chi_{B_0(r)} (y) dy) ds\\
& \geq &  \gamma^{2n} e^{-4r^{\beta} (2n-2) t} t^{2n-1}  (\int_{\R^d \cap E_{\xi, y}}  e^{-(t-s) |\xi|^{\beta}}  e^{-s|y|^{\beta}} \chi_{B_0(r)}(\xi-y) \chi_{B_0(r)} (y) dy ds\\
&& + e^{-t|\xi|^{\beta}}\int_{\R^d \cap F_{\xi, y}}   \chi_{B_0(r)}(\xi-y) \chi_{B_0(r)} (y) dy) \\
& \gtrsim &  \gamma^{2n} e^{-4r^{\beta} (2n-2) t} t^{2n-1}  (  e^{-4r^2t}e^{-t|\xi|^{\beta}} \int_{\R^d \cap E_{\xi, y}}  \chi_{B_0(r)}(\xi-y) \chi_{B_0(r)} (y) dy\\
&& + e^{-t|\xi|^{\beta}}\int_{\R^d \cap F_{\xi, y}}   \chi_{B_0(r)}(\xi-y) \chi_{B_0(r)} (y) dy) \\
& \gtrsim &  \gamma^{2n} e^{-4r^{\beta} (2n-1) t} t^{2n-1}  e^{-t|\xi|^{\beta}}[\chi_{B_0(r)} \ast \chi_{B_0(r)}](\xi) \\
& \gtrsim &  \gamma^{2n} e^{-4r^{\beta} (2n-1) t} t^{2n-1}  e^{-t|\xi|^{\beta}}\chi_{B_0(r)} (\xi)
\end{eqnarray*}
and 
\begin{eqnarray*}
a_{2n+1} (t, \xi) & \gtrsim &  \gamma^{2n+1} e^{-4r^{\beta} (2n) t} t^{2n}  e^{-t|\xi|^{\beta}}\chi_{B_0(r)} (\xi).
\end{eqnarray*}
Then, by induction we have
\begin{eqnarray*}
a_{i} (t, \xi) & \gtrsim &  \gamma^{i} e^{-4r^{\beta} (i-1) t} t^{i-1}  e^{-t|\xi|^{\beta}}\chi_{B_0(r)} (\xi),  i =1,2,... .
\end{eqnarray*}
Then, by Lemma \ref{rl},  for  $\gamma \geq 4 r^{\beta} e,$ if $T \geq \frac{1}{4r^{\beta}},$ we have 
\begin{eqnarray*}
\left\| \sum_{i=1}^{\infty} u_i \right\|_{L_{T}^{\infty}(X )} 
& \geq & \left\| \sum_{i=1}^{\infty} \left\| u_i \right\|_{\mathcal{F}L^1} \right\|_{L_{T}^{\infty}}\\
& \geq  &  \left\| \sum_{i=1}^{\infty} \left\| \gamma^{i} e^{-4r^{\beta} (i-1) t} t^{i-1} e^{-t|\xi|^2} \chi_{B_0(r)} (\xi) \right\|_{L^1} \right\|_{L_{T}^{\infty}}\\
& \geq  &  \left\| \sum_{i=1}^{\infty}  \gamma^{i} e^{(-4r^{\beta} (i-1)-1) t} t^{i-1}   \left\|\chi_{B_0(r)} (\xi) \right\|_{L^1} \right\|_{L_{T}^{\infty}} = \infty.
\end{eqnarray*}
By  similar argument as above, we can obtain that for general $k,$
\begin{eqnarray*}
a_{ik- (i-1)} (t, \xi) & \gtrsim &  \gamma^{ik - (i-1)} e^{-4r^{\beta}(k-1)i t} t^{i}  e^{-t|\xi|^{\beta}}\chi_{B_0(r)} (\xi),  i =1,2,... .
\end{eqnarray*}
Then, by Lemma \ref{rl},  for  $\gamma \geq 4 r^{\beta}(k-1)e,$ if $T \geq \frac{1}{4r^{\beta}(k-1)},$ we have 
\begin{eqnarray*}
\left\| \sum_{i=1}^{\infty} u_i \right\|_{L^{\infty}_{T}(X )}
& \geq  &  \left\| \sum_{i=1}^{\infty} \left\| \gamma^{i} e^{-4r^{\beta} (i-1) t} t^{i-1}  e^{-t|\xi|^{\beta}}\chi_{B_0(r)} (\xi) \right\|_{L^1} \right\|_{L_{T}^{\infty}}\\
& \geq  &  \left\| \sum_{i=1}^{\infty} \left\| \gamma^{i} e^{(-4r^{\beta} (i-1)-1) t} t^{i-1}  \chi_{B_0(r)} (\xi) \right\|_{L^1} \right\|_{L_{T}^{\infty}}
 =  \infty.
\end{eqnarray*}
\end{proof}
We are now ready to prove Theorem \ref{dip}.
\begin{proof}[Proof of Theorem \ref{dip}]
By Proposition \ref{lw} and Lemmas \ref{ia}, \ref{me1}, we have, for $0<T<T^*,$
\[ u(t)=\sum_{i=1}^{\infty} a_i(t) \in L^{\infty}_{loc} ([0, T^*), X(\m))\] 
is a solution of \eqref{fh}. By Lemma \ref{cl}, 
$ \left\| \sum_{i=1}^{\infty} u_i(t)\right\|_{L^{\infty}_{loc}([0, T^*), X)} \leq \left\| \sum_{i=1}^{\infty} a_i(t)\right\|_{L^{\infty}_{loc}([0, T^*),  X)}.$
By Lemma \ref{fl}, we have, for $T^{*}\geq  \frac{1}{4r^{\beta} (k-1)},$$\left\| \sum_{i=1}^{\infty} u_i(t)\right\|_{L^{\infty}([0, T^*),  X)}=\infty.$
This completes the proof.
\end{proof}
\noindent
{\textbf{Acknowledgment}:}  D.G. B is thankful to DST-INSPIRE (DST/INSPIRE/04/2016/001507) for the research grant.\\
\noindent
\textbf{Declaration}.  There is no conflict of interest for this article.

\bibliographystyle{amsplain}
\bibliography{heat}
\end{document}